\title{Rainbow Connection Number and \\ Connected Dominating Sets}
\author[1]{L. Sunil Chandran}
\author[1]{Anita Das}
\author[1]{Deepak Rajendraprasad}
\author[2]{Nithin M. Varma}
\affil[1]
{
	Department of Computer Science and Automation, \authorcr 
	Indian Institute of Science, \authorcr
	Bangalore -560012, India. \authorcr
	\{sunil, anita, deepakr\}@csa.iisc.ernet.in
}
\affil[2]
{
	Department of Computer Science and Engineering, \authorcr
	National Institute of Technology, \authorcr
	Calicut - 673 601, India. \authorcr
	nithvarma@gmail.com
}
\date{}
\theoremstyle{definition}
\newtheorem{defn}{Definition}
\theoremstyle{plain}
\newtheorem{thm}{Theorem}
\newtheorem{lem}[thm]{Lemma}
\newtheorem{cor}[thm]{Corollary}
\theoremstyle{remark}
\newtheorem{rem}{Remark}
\newtheorem{example}{Example}
\def\-{\mbox{--}}
\begin{document}
\maketitle

\begin{abstract}
{\em Rainbow connection number} $rc(G)$ of a connected graph $G$ is the minimum number of colours needed to colour the edges of $G$, so that every pair of vertices is connected by at least one path in which no two edges are coloured the same. In this paper we show that for every connected graph $G$, with minimum degree at least $2$, the rainbow connection number is upper bounded by $\gamma_c(G) + 2$, where $\gamma_c(G)$ is the connected domination number of $G$. Bounds of the form $diameter(G) \leq rc(G) \leq diameter(G) + c$, $1 \leq c \leq 4$, for many special graph classes follow as easy corollaries from this result. This includes interval graphs, AT-free graphs, circular arc graphs, threshold graphs, and chain graphs all with minimum degree at least $2$ and connected. We also show that every bridge-less chordal graph $G$ has $rc(G) \leq 3.radius(G)$. In most of these cases, we also demonstrate the tightness of the bounds.

An extension of this idea to two-step dominating sets is used to show that for every connected graph on $n$ vertices with minimum degree $\delta$, the rainbow connection number is upper bounded by $3n/(\delta + 1) + 3$. This solves an open problem from \cite{schiermeyer2009rainbow}, improving the previously best known bound of $20n/\delta$ \cite{krivelevich2010rainbow}. Moreover, this bound is seen to be tight up to additive factors by a construction mentioned in \cite{caro2008rainbow}.
\end{abstract}

\noindent {\bf Keywords:} rainbow connectivity, rainbow colouring, connected dominating set, connected two-step dominating set, radius, minimum degree.

\section{Introduction}

{\em Edge colouring} of a graph is a function from its edge set to the set of natural numbers. A path in an edge coloured graph with no two edges sharing the same colour is called a {\em rainbow path}. An edge coloured graph is said to be {\em rainbow connected} if every pair of vertices is connected by at least one rainbow path. Such a colouring is called a {\em rainbow colouring} of the graph. The minimum number of colours required to rainbow colour a connected graph is called its {\em rainbow connection number}, denoted by $rc(G)$. For example, the rainbow connection number of a complete graph is $1$, that of a path is its length, that of an even cycle is its diameter, that of an odd cycle is one more than its diameter, and that of a star is its number of leaves. Note that disconnected graphs cannot be rainbow coloured and hence the rainbow connection number for them is left undefined. For a basic introduction to the topic, see Chapter $11$ in \cite{chartrand2008chromatic}.

The concept of rainbow colouring was introduced in \cite{chartrand2008rainbow}. Precise values of rainbow connection number for many special graphs like complete multi-partite graphs, Peterson graph and wheel graphs were also determined there. It was shown in \cite{chakraborty2009hardness} that computing the rainbow connection number of an arbitrary graph is NP-Hard. To rainbow colour a graph, it is enough to ensure that every edge of some spanning tree in the graph gets a distinct colour. Hence order of the graph minus one is an upper bound for rainbow connection number. There have been attempts to find better upper bounds for the same in terms of other graph parameters like connectivity, minimum degree etc. 

In the search towards good upper bounds for rainbow connection number, an idea that turned out to be successful more than once is a ``strengthened'' notion of connected $k$-step dominating set (Definition \ref{defn:domination} in Section \ref{sec:prelims}): a strengthening so that a rainbow colouring of the induced graph on such a set can be extended to the whole graph using a constant number of additional colours. Theorem $1.4$ in \cite{caro2008rainbow} was proved using a strengthened connected $1$-step dominating set and Theorem $1.1$ in \cite{krivelevich2010rainbow} was proved using a strengthened connected $2$-step dominating set. A closer examination revealed to us that the additional requirements imposed on the connected dominating sets in both those cases were far more restrictive than what was essential. This led us to the investigation of what is the weakest possible strengthening of a connected dominating set which can achieve the same. Since every edge incident on a pendant vertex will need a different colour, it is easy to see that such a dominating set should necessarily include all the pendant vertices in the graph. Quite surprisingly, it turns out that this obvious necessary condition is also sufficient! (Theorem \ref{thm:twoway} in Section \ref{sec:results}). For rainbow connection number of many special graph classes, the above result gives tight upper bounds which were otherwise difficult to obtain (Theorem \ref{thm:specialclass} in Section \ref{sec:results}). The farthest we could get with the idea was a curious theorem about chordal graphs (Theorem \ref{thm:chordal} in Section \ref{sec:results}).

A similar inquiry for the weakest strengthening a connected two-step dominating set (Theorem \ref{thm:twowaytwostep} in Section \ref{sec:results}) led us to the solution of an important open problem in this area regarding the optimal upper bound of rainbow connection number in terms of minimum degree. (See Theorem \ref{thm:rainmindeg} in Section \ref{sec:results} and the remarks therein). As an intermediate step in solving the above problem, we also discovered a tight upper bound on the size of a minimum connected two-step dominating set of a graph in terms of its minimum degree (Theorem \ref{thm:domsize} in Section \ref{sec:results}). To the best of our knowledge, this bound is not yet reported in literature. It may have applications beyond the realm of rainbow colouring. For instance, Theorem \ref{thm:domsize} immediately gives an upper bound on radius of every graph in terms of its minimum degree (Corollary \ref{cor:radius} in Section \ref{sec:results}) which marginally improves the one reported in \cite{erds1989radius}. 

\subsection{Preliminaries}
\label{sec:prelims}

See Table \ref{tab:notations} for the notations employed throughout the paper.

\begin{table}[t]
\begin{center}
\caption{Notations. $G$ is a graph, $v$ a vertex in $G$ and $S$ a subset of vertices in $G$. $k$ is a non-negative integer.}
\label{tab:notations}
\renewcommand{\tabcolsep}{0.5cm}
\renewcommand{\arraystretch}{1.3}
\begin{tabular}{ll}
& \\
\hline
$V(G)$ 			& Vertex set of $G$. \\
$E(G)$ 			& Edge set of $G$. \\
$|G|$ 			& Number of vertices in $G$ or order of $G$. \\
$\delta(G)$		& Minimum degree of $G$ \\
$pen(G)$ 		& Number of pendant vertices in $G$ \\
$rc(G)$ 		& Rainbow connection number of $G$ \\
$d(u,v)$		& Distance between vertices $u$ and $v$ \\
$ecc(v)$		& Eccentricity of $v$ \\
$diam(G)$		& Diameter of $G$ \\ 
$rad(G)$		& Radius of $G$ \\
$\gamma_c^k(G)$	& Connected $k$-step domination number of $G$ \\ 
$\gamma_c(G)$	& $\gamma^1_c(G)$, Connected domination number of $G$ \\ 
$N^k(S)$		& Set of all vertices at distance exactly $k$ from set $S$ \\
$N^k(v)$		& $N^k(\{v\})$ \\
$N(S)$			& $N^1(S)$, Neighbourhood of $S$ \\
$N(v)$			& $N^1(\{v\})$, Neighbourhood of $v$ \\
$G[S]$			& Induced subgraph of $G$ on $S$ \\
\hline
\end{tabular}
\end{center}
\end{table}

All graphs considered in this article are finite, simple and undirected. The {\em length} of a path is its number of edges.  An edge in a connected graph is called a {\em bridge}, if its removal disconnects the graph. A graph with no bridges is called a {\em bridge-less} graph.

\begin{defn}
Let $G$ be a connected graph. The {\em distance} between two vertices $u$ and $v$ in $G$, denoted by $d(u,v)$ is the length of a shortest path between them in $G$. The {\em eccentricity} of a vertex $v$ is $ecc(v) := \max_{x \in V(G)}{d(v, x)}$. The {\em diameter} of $G$ is $diam(G) := \max_{x \in V(G)}{ecc(x)}$. The {\em radius} of $G$ is $rad(G) := \min_{x \in V(G)}{ecc(x)}$. Distance between a vertex $v$ and a set $S \subseteq V(G)$ is $d(v, S) := \min_{x \in S}{d(v,x)}$. The {\em $k$-step open neighbourhood} of a set $S \subseteq V(G)$ is $N^k(S) := \{x \in V(G) | d(x, S) = k\}$, $k \in \{0, 1, 2, \cdots \}$. The {\em degree} of a vertex $v$ is $degree(v) := |N ^1(\{v\})|$. The minimum degree of $G$ is $\delta(G) := \min_{x \in V(G)}{degree(x)}$. A vertex is called {\em pendant} if its degree is $1$ and {\em isolated} if its degree is $0$. 
\end{defn}

\begin{defn}
\label{defn:domination}
Given a graph $G$, a set $D \subseteq V(G)$ is called a {\em $k$-step dominating set} of $G$, if every vertex in $G$ is at a distance at most $k$ from $D$. Further, if $D$ induces a connected sub-graph of $G$, it is called a {\em connected $k$-step dominating set} of $G$. The cardinality of a minimum connected $k$-step dominating set in $G$ is called its {\em connected $k$-step domination number} $\gamma^k_c(G)$. When $k = 1$, we may omit the qualifier ``$1$-step'' in the above names and the superscript $1$ in the notation.
\end{defn}

Note that connected $k$-step dominating sets exist only for connected graphs. Connected $k$-step domination number is left undefined otherwise.

\begin{defn}
An {\em intersection graph} of a family of sets $\mathcal{F}$, is a graph whose vertices can be mapped to sets in $\mathcal{F}$ such that there is an edge between two vertices in the graph if and only if the corresponding two sets in $\mathcal{F}$ have a non-empty intersection. An {\em interval graph} is an intersection graph of intervals on the real line. A {\em unit interval graph} is an intersection graph of unit length intervals on the real line. A {\em circular arc graph} is an intersection graph of arcs on a circle.  
\end{defn}

\begin{defn}
An independent triple of vertices $x$, $y$, $z$ in a graph $G$ is an {\em asteroidal triple $($AT$)$}, if between every pair of vertices in the triple, there is a path that does not contain any neighbour of the third. A graph without asteroidal triples is called an {\em AT-free graph.}
\end{defn}

\begin{defn}
A graph $G$ is a {\em threshold graph}, if there exists a weight function $w:V(G) \rightarrow \mathbb{R}$ and a real constant $t$ such that two vertices $u, v \in V(G)$ are adjacent if and only if $w(u) + w(v) \geq t$.
\end{defn}

\begin{defn}
A bipartite graph $G(A, B)$ is called a {\em chain graph} if the vertices of $A$ can be ordered as $A = (a_1, a_2, \cdots, a_k)$ such that $N(a_1) \subseteq N(a_2) \subseteq \cdots \subseteq N(a_k)$ \cite{yannakakis1982complexity}.
\end{defn}

\begin{defn}
A graph $G$ is called {\em chordal}, if there is no induced cycle of length greater than $3$.
\end{defn}

\section{Our Results}
\label{sec:results}

The main ideas in this paper are captured in Theorem \ref{thm:twoway}, Theorem \ref{thm:twowaytwostep} and Theorem \ref{thm:domsize}. The other results are consequences of them. Among the results, Theorem \ref{thm:rainmindeg} demands a special mention due to the prominence of the question it answers in the area of rainbow colouring. To state Theorem \ref{thm:twoway} in its full generality, we need to make one new definition.

\begin{defn}[Two-way dominating set]
A dominating set $D$ in a graph $G$ is called a {\em two-way dominating set} if every pendant vertex of $G$ is included in $D$. In addition, if $G[D]$ is connected, we call $D$ a {\em connected two-way dominating set}. 
\end{defn}

\begin{rem}
If $\delta(G) \geq 2$, then every (connected) dominating set in $G$ is a (connected) two-way dominating set. We use the name ``two-way domination'' since the definition implies that every vertex in $V(G) \backslash D$ has at least two edge disjoint paths to $D$.
\end{rem}

\begin{thm}
\label{thm:twoway}
If $D$ is a connected two-way dominating set in a graph $G$, then
$$ rc(G) \leq rc(G[D]) + 3. $$
\end{thm}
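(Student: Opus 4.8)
The plan is to take an optimal rainbow colouring of $G[D]$ using $rc(G[D])$ colours and extend it to all of $G$ with three extra colours, call them $c_1, c_2, c_3$, none of which appear on $E(G[D])$. First I would partition the vertices outside $D$: since $D$ is a dominating set, every vertex of $V(G)\setminus D$ has a neighbour in $D$; fix for each such vertex $v$ one such neighbour $f(v)\in D$ (its ``foot''), and let $F = f(V(G)\setminus D)\subseteq D$. The edges I must still colour are (i) the edges between $V(G)\setminus D$ and $D$, and (ii) the edges inside $V(G)\setminus D$. The strategy is to colour every chosen ``foot edge'' $\{v,f(v)\}$ with colour $c_1$ except that — and this is where the two-way hypothesis is used — every pendant vertex $v$ lies in $D$ already, so no pendant edge ever needs a colour here; all remaining edges (the non-foot edges between $V(G)\setminus D$ and $D$, and all edges inside $V(G)\setminus D$) can be assigned colours arbitrarily, say all $c_2$, since we will never route a rainbow path through them.

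Next I would verify rainbow connectivity for the three types of vertex pairs. For two vertices $u,w\in D$, the rainbow path already exists inside $G[D]$. For $v\notin D$ and $w\in D$: take the edge $\{v,f(v)\}$ coloured $c_1$, then a rainbow $f(v)$--$w$ path inside $G[D]$; this uses $c_1$ once and colours from the $G[D]$-palette, which are disjoint from $c_1$, so it is rainbow. The only subtlety is the degenerate case $f(v)=w$, handled by the single edge. For two vertices $u,v\notin D$: concatenate $\{u,f(u)\}$ (colour $c_1$), a rainbow $f(u)$--$f(v)$ path in $G[D]$, and $\{f(v),v\}$ — but now colour $c_1$ would repeat. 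This is the one genuinely delicate point, and it is precisely why three extra colours (not one) are needed: I would instead use a \emph{two-colour} scheme on the foot edges, or equivalently reserve $c_1$ for ``outgoing'' and $c_3$ for ``incoming'' legs. Concretely, recolour so that the path $u \to f(u) \to \cdots \to f(v) \to v$ uses $c_1$ on $\{u,f(u)\}$ and $c_3$ on $\{f(v),v\}$; to make this consistent I would give \emph{each} foot edge \emph{both} the possibility of being traversed as a $c_1$-leg or a $c_3$-leg — i.e. the clean way is: colour $\{v,f(v)\}$ with $c_1$, and additionally note every $v\notin D$ has (by two-way domination giving two edge-disjoint paths to $D$, or simply $\deg v\ge 2$ when $v\notin D$ since pendants are in $D$) a \emph{second} neighbour; route the incoming leg of $v$ through a $c_2$/$c_3$-coloured edge to a vertex of $D$. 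After a moment's care this closes: one reserves $c_1$ and $c_2$ for the two endpoints' attachment edges and keeps $c_3$ as the single ``bridge within $D$ is avoided'' slack, or — following the cleaner bookkeeping — colour attachment edges with $c_1, c_2$ alternately per endpoint role and use $c_3$ only if an edge inside $V(G)\setminus D$ must be traversed.

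The main obstacle, then, is organizing the colours on the edges incident to $V(G)\setminus D$ so that for \emph{every} pair $u,v\notin D$ the out-leg at $u$, the in-leg at $v$, and the path through $G[D]$ are pairwise colour-disjoint, \emph{simultaneously} over all pairs, using only a fixed three-colour budget and exploiting that pendant vertices are already inside $D$. I expect the right move is: (1) assign colour $c_1$ to one fixed edge from each $v\notin D$ to $D$; (2) assign colour $c_2$ to one fixed edge from each $v\notin D$ to $D$ that is \emph{different} from the $c_1$-edge (possible since $v$ has degree $\ge 2$ and pendants are excluded) — if $v$'s two guaranteed edges both go into $D$ this is immediate, otherwise use an edge to another non-$D$ vertex plus $c_3$; (3) colour all remaining edges $c_3$. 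Then for $u,v\notin D$ take: $c_1$-edge at $u$ into $D$, a rainbow $G[D]$-path avoiding $c_1,c_2,c_3$, then the $c_2$-edge at $v$ — rainbow. The residual case where $v$'s attachment needs to pass through a non-$D$ neighbour is absorbed by $c_3$, and one checks no path ever needs all of $c_1,c_2,c_3$ plus a repeat. Handling that residual case cleanly is the part I would spend the most care on; everything else is routine concatenation of a short coloured stub with an already-rainbow path in $G[D]$.
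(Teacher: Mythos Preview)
Your high-level framework is correct---rainbow colour $G[D]$ with $k:=rc(G[D])$ colours and extend with three fresh colours on the legs into $D$---but the specific colouring you propose does not work, and the ``residual case'' you defer is in fact the crux of the proof. Consider a graph in which the outside vertices form a path $a_1,a_2,\ldots,a_m$, each $a_i$ having exactly one neighbour $d_i$ in $D$ (and the $d_i$ inducing a connected subgraph). Under your scheme every leg $\{a_i,d_i\}$ is coloured $c_1$ (no $a_i$ has a second edge to $D$, so step~(2) never assigns $c_2$), and every edge $\{a_i,a_{i+1}\}$ is coloured $c_3$. But then any path between $a_1$ and $a_m$ (for $m\ge 3$) either stays in the outside path and repeats $c_3$, or enters and exits $D$ through two $c_1$-edges; there is no rainbow path. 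The ``absorb by $c_3$'' manoeuvre cannot rescue this, because the neighbour $a_2$ you would hop to has exactly the same defect as $a_1$: its only edge to $D$ is also coloured $c_1$.

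The missing idea is not to try to give every outside vertex two differently-coloured legs to $D$, but rather to partition $V(G)\setminus D$ into two classes $X$ and $Y$ so that every vertex of $X$ has a neighbour in $Y$ and vice versa, and then colour \emph{all} $X$--$D$ edges with $c_1$ and \emph{all} $Y$--$D$ edges with $c_2$. Such a partition is obtained by taking the two sides of a spanning forest of $H:=G[V(G)\setminus D]$; the isolated vertices $Z$ of $H$ are handled separately (they have at least two legs to $D$, so give one $c_1$ and the rest $c_2$). Colour every edge inside $H$ with $c_3$. Now for $x,x'\in X$, hop $x\to y(x)$ along a forest edge ($c_3$), then $y(x)\to D$ ($c_2$), a rainbow path in $G[D]$, then $D\to x'$ ($c_1$); the cross-class and $Z$ cases are strictly easier. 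The two-way hypothesis enters exactly once: to guarantee that every vertex outside $D$ has degree at least~$2$, so that vertices of $Z$ really do have two legs to $D$.
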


Proof is given in Section \ref{sec:twoway}

\begin{rem}
The reader may wonder why the pendant vertices had to be included in the dominating set $D$. Our strategy is to colour $G[D]$ first and then colour all the edges outside using a constant number (in this case $3$) of additional colours ensuring rainbow connectivity. Pendent vertices are always a bottleneck for rainbow colouring since no two pendant edges (edges incident on pendant vertices) can share the same colour. Hence the restriction.
\end{rem}

\begin{cor}
\label{cor:raindom}
For every connected graph $G$, with $\delta(G) \geq 2$,
$$ rc(G) \leq \gamma_c(G) + 2. $$
\end{cor}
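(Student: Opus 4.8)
The plan is to obtain this as an immediate consequence of Theorem \ref{thm:twoway}, by taking $D$ to be a minimum connected dominating set and then bounding $rc(G[D])$ by the crudest available estimate, namely a spanning-tree argument.

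First I would let $D$ be a minimum connected dominating set of $G$, so that $|D| = \gamma_c(G)$ and $G[D]$ is connected. Since $\delta(G) \geq 2$, the graph $G$ has no pendant vertices, so by the remark following the definition of a two-way dominating set, $D$ is automatically a \emph{connected two-way dominating set}. Theorem \ref{thm:twoway} then applies and yields $rc(G) \leq rc(G[D]) + 3$.

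Next I would bound $rc(G[D])$. As $G[D]$ is connected on $\gamma_c(G)$ vertices, it contains a spanning tree with $\gamma_c(G) - 1$ edges; colouring the edges of such a tree with pairwise distinct colours already rainbow connects $G[D]$, so $rc(G[D]) \leq \gamma_c(G) - 1$. (When $\gamma_c(G) = 1$ this reads $rc(G[D]) = 0$, which is consistent.) Substituting into the previous inequality gives $rc(G) \leq (\gamma_c(G) - 1) + 3 = \gamma_c(G) + 2$, which is the claim.

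There is essentially no obstacle here beyond invoking Theorem \ref{thm:twoway} correctly; the two points that require a moment's care are that the hypothesis $\delta(G) \geq 2$ is precisely what upgrades an ordinary connected dominating set to a connected two-way dominating set, and that it is the trivial ``order minus one'' bound on $rc(G[D])$ which absorbs one unit and converts the additive $3$ of Theorem \ref{thm:twoway} into the additive $2$ in the corollary.
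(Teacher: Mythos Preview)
Your proof is correct and follows essentially the same approach as the paper: take a minimum connected dominating set $D$, observe that $\delta(G)\geq 2$ makes it a connected two-way dominating set, apply Theorem~\ref{thm:twoway}, and use the spanning-tree bound $rc(G[D])\leq |D|-1=\gamma_c(G)-1$ to absorb one unit from the additive constant.
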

\begin{proof}
This follows from Theorem \ref{thm:twoway} since (i) in this case, every connected dominating set in $G$ is a connected two-way dominating set and (ii) $rc(G[D]) \leq |D| - 1 = \gamma_c(G) - 1$ for a minimum connected dominating set $D$ in $G$.
\end{proof}

\begin{cor}
For every connected graph $G$, 
$$ rc(G) \leq \gamma_c(G) + pen(G) + 2.$$
\end{cor}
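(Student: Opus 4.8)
The plan is to deduce this from Theorem \ref{thm:twoway} in exactly the way Corollary \ref{cor:raindom} was deduced; the only extra work is to exhibit, even when $\delta(G) = 1$, a connected two-way dominating set $D$ whose size is bounded by $\gamma_c(G) + pen(G)$.

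First I would fix a minimum connected dominating set $C$ of $G$, so that $G[C]$ is connected and $|C| = \gamma_c(G)$, and let $P$ denote the set of all pendant vertices of $G$. I would then take $D := C \cup P$. The key observation is that a dominating set must ``take care of'' every pendant vertex in a restricted way: if $v$ is a pendant vertex with $v \notin C$, then $v$ has a neighbour in $C$, and since $v$ has only one neighbour, that neighbour lies in $C$. Hence in $G[D]$ every vertex of $D \setminus C$ is adjacent to the connected set $C$, so $G[D]$ is connected. By construction $D$ contains all pendant vertices of $G$ and contains the dominating set $C$, so $D$ is a connected two-way dominating set, and $|D| \le |C| + |P| = \gamma_c(G) + pen(G)$.

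Next I would invoke Theorem \ref{thm:twoway} for this $D$, combined with the trivial bound $rc(G[D]) \le |D| - 1$ obtained from any spanning tree of the connected graph $G[D]$, to get
$$ rc(G) \le rc(G[D]) + 3 \le |D| - 1 + 3 \le \gamma_c(G) + pen(G) + 2, $$
which is the desired inequality.

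I do not expect a genuine obstacle; the content is entirely the observation that a connected dominating set automatically contains, for each pendant vertex, either that vertex or its unique neighbour, which is what keeps $G[C \cup P]$ connected while costing at most $pen(G)$ extra vertices. The only things to check are degenerate cases: if $D = V(G)$ (for instance $G = K_2$) Theorem \ref{thm:twoway} is vacuous but the inequality holds trivially, and if $|C| = 1$, say $C = \{v\}$, then $v$ is adjacent to every other vertex so $G[D]$ is again plainly connected and the argument is unchanged. When $pen(G) = 0$ one simply recovers Corollary \ref{cor:raindom}.
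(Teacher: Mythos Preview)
Your proposal is correct and follows exactly the paper's approach: add all pendant vertices to a minimum connected dominating set to obtain a connected two-way dominating set of size at most $\gamma_c(G)+pen(G)$, then apply Theorem~\ref{thm:twoway} together with the spanning-tree bound $rc(G[D])\le |D|-1$. Your explicit verification that $G[C\cup P]$ remains connected (via the unique neighbour of each pendant vertex lying in $C$) and your handling of degenerate cases simply flesh out details the paper leaves implicit.
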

\begin{proof}
This follows from Theorem \ref{thm:twoway} since adding all the pendant vertices to a minimum connected dominating set gives a connected two-way dominating set of size at most $\gamma_c(G) + pen(G)$. 
\end{proof}

Diameter of a graph is a trivial lower bound for its rainbow connection number. Theorem \ref{thm:twoway} gives upper bounds which are only a small additive constant above the diameter for many special graph classes. 

\begin{thm}
\label{thm:specialclass}
Let $G$ be a connected graph with $\delta(G) \geq 2$. Then,
\begin{enumerate}[$(i)$]
\item if $G$ is an interval graph, $diam(G) \leq rc(G) \leq diam(G) + 1$, 
\item if $G$ is AT-free, $diam(G) \leq rc(G) \leq diam(G) + 3$,
\item if $G$ is a threshold graph, $diam(G) \leq rc(G) \leq 3$,
\item if $G$ is a chain graph, $diam(G) \leq rc(G) \leq 4$,
\item if $G$ is a circular arc graph, $diam(G) \leq rc(G) \leq diam(G) + 4$.
\end{enumerate}
Moreover, there exist interval graphs, threshold graphs and chain graphs with minimum degree at least $2$ and rainbow connection number equal to the corresponding upper bound above. There exists an AT-free graph $G$ with minimum degree at least $2$ and $rc(G) = diam(G) + 2$, which is $1$ less than the upper bound above. 
\end{thm}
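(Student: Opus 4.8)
The plan is to read every upper bound off Corollary~\ref{cor:raindom} or, when the estimate $rc(G[D])\le|D|-1$ is too wasteful, off Theorem~\ref{thm:twoway} directly. Since $\delta(G)\ge 2$, any connected dominating set of $G$ is automatically a connected two-way dominating set, so $rc(G)\le\gamma_c(G)+2$; for parts $(i)$--$(iv)$ it therefore suffices to bound $\gamma_c(G)$, or to produce a connected dominating set $D$ with $rc(G[D])$ small, in terms of $diam(G)$. The lower bounds $diam(G)\le rc(G)$ are trivial, so the work is in the upper bounds and the tightness constructions.

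I would dispatch the easy classes first. A connected threshold graph has a universal vertex (in any construction sequence by isolated/dominating vertices the last vertex must be of dominating type), so $\gamma_c(G)=1$ and $rc(G)\le 3$. In a chain graph $G(A,B)$ with $N(a_1)\subseteq\cdots\subseteq N(a_k)$, the hypothesis $\delta(G)\ge 2$ makes every vertex of $B$ a neighbour of $a_k$, and symmetrically the last vertex $b$ in a chain ordering of $B$ is adjacent to all of $A$; then $\{a_k,b\}$ is a connected dominating set and $rc(G)\le 4$. For interval graphs I would fix an interval representation and build the standard left-to-right greedy dominating shortest path $u_0,u_1,\dots,u_k$ (start at the interval with least left endpoint, repeatedly jump to the neighbour with greatest right endpoint), a connected dominating set with $k=d(u_0,u_k)\le diam(G)$; the point is to sharpen this to $\gamma_c(G)\le diam(G)-1$ when $diam(G)\ge 2$, by trimming the ends of the path and noting that an endpoint that is \emph{essential}, i.e.\ the sole dominator of some vertex, buys back an extra unit of diameter on the far side, so the two ends balance. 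This gives $rc(G)\le diam(G)+1$ (the case $diam(G)\le 1$, where $G$ is complete, being trivial). For AT-free graphs I would call on the dominating pair theorem of Corneil, Olariu and Stewart: a connected AT-free graph has a pair of vertices every shortest path between which is dominating, and such a path has at most $diam(G)+1$ vertices, so $\gamma_c(G)\le diam(G)+1$ and $rc(G)\le diam(G)+3$.

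The circular arc case is the main obstacle. From an arc representation: if the arcs fail to cover the circle, cutting at an uncovered point turns it into an interval representation of the same graph and part $(i)$ applies, giving even $rc(G)\le diam(G)+1$. If the arcs do cover the circle, take a \emph{minimum} covering subfamily $A_1,\dots,A_t$ in cyclic order --- a connected dominating set for which minimality forces non-consecutive arcs to be disjoint (an arc wedged between two overlapping cover arcs would be redundant), so $G[\{A_1,\dots,A_t\}]$ is essentially the cycle $C_t$ and $rc(G[D])\le\lceil t/2\rceil$. The delicate step, really the heart of this case, is to squeeze $\lceil t/2\rceil\le diam(G)+1$ out of minimality: minimality caps how many consecutive cover arcs a single arc can intersect, so distances in $G$ stay within a bounded factor of cyclic distances along the cover and a diameter-length walk cannot advance more than about half the cover. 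Feeding this into Theorem~\ref{thm:twoway} yields $rc(G)\le diam(G)+4$.

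For tightness I would give small examples with lower bounds proved by a ``colour signature'' pigeonhole: with $c$ colours every rainbow path has length at most $c$, so two vertices whose vectors of edge colours into a small common separating set coincide cannot be rainbow connected. The join of a triangle with an independent set of size $9$ is a threshold graph with $\delta\ge 3$, $diam=2$ and $rc=3$ (only $2^3$ signatures at the three triangle vertices, so $2$ colours fail); a complete bipartite graph $K_{2,t}$ with $t\ge 10$ is a chain graph with $\delta=2$ and $rc=4$ (only $3^2$ signatures at the two vertices of the small side); the one-vertex join $K_1$ with three disjoint edges is an interval graph with $\delta=2$, $diam=2$ and $rc=3=diam+1$ (the apex is the only common neighbour of any two mutually non-adjacent non-apex vertices, and $2$ colours cannot separate three of them at the apex); and for AT-free graphs one exhibits an example reaching $diam+2$, one short of the bound. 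Verifying these lower bounds --- ruling out all colourings with fewer colours --- is where the rest of the effort goes.
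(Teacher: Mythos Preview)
Your plan is the paper's plan: feed a small connected dominating set into Theorem~\ref{thm:twoway}/Corollary~\ref{cor:raindom}, using precisely the same dominating objects---a universal vertex for threshold graphs, a dominating edge for chain graphs, a short dominating path for interval graphs, the Corneil--Olariu--Stewart dominating pair for AT-free graphs, and a minimum arc cover of the circle for circular arc graphs. For interval graphs the paper reaches $\gamma_c(G)\le diam(G)-1$ not by your greedy-path-plus-trimming but by taking the cliques $S_1,S_2$ through the extremal endpoints and a shortest $S_1$--$S_2$ path; the effect is the same.

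The one spot where the paper's execution is cleaner is~$(v)$. Rather than bounding how many consecutive cover arcs a single arc can meet, the paper shows $diam(C_k)\le diam(G)$ in one stroke: if some $c_i,c_j$ on the minimum cover $C_k$ were strictly closer in $G$ than along $C_k$, a shortest $c_i$--$c_j$ path in $G$ would, being a chain of overlapping arcs, cover one of the two segments of $\mathcal{C}\setminus(c_i\cup c_j)$ with fewer arcs than the corresponding stretch of $C_k$, contradicting minimality. Then $rc(G[D])\le rc(C_k)\le diam(C_k)+1\le diam(G)+1$, and Theorem~\ref{thm:twoway} finishes. Your ``bounded advance'' sketch heads to the same place but would need care to pin down the constant.

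Your tightness constructions differ from the paper's but are equally valid. The paper leans throughout on the known value $rc(K_{2,n})=4$ for $n\ge10$: it uses $K_{2,n}$ for the chain and AT-free cases, $K_{2,n-1}$ plus an edge for threshold, and a path capped at each end by two triangles for interval graphs of any diameter. Your signature-pigeonhole examples (triangle joined to $I_9$; $K_1$ joined to $3K_2$) are more self-contained, though they hit only small diameters.
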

\begin{rem}
The upper bounds follow from Theorem \ref{thm:twoway} since 
(i)  every interval graph $G$ which is not isomorphic to a complete graph has a dominating path of length at most $diam(G) - 2$,
(ii) every AT-free graph $G$ has a dominating path of length at most $diam(G)$,
(iii) a maximum weight vertex in a connected threshold graph $G$ is a dominating vertex,
(iv) every connected chain graph $G$ has a dominating edge, and
(v) every circular arc graph $G$, which is not an interval graph, has a dominating cycle of diameter at most $diam(G)$.
\end{rem}

Tight examples and proofs for non-trivial claims made in the above remark are given in Section \ref{sec:specialclass}. We could not find tight examples for AT-free and circular arc graphs. It may be interesting to see whether those two upper bounds can be improved.

Another interesting application of Theorem \ref{thm:twoway} is the following result on chordal graphs. It is curious since  chordal graphs, unlike interval graphs or AT-free graphs, can grow in more than two directions and hence they need not contain dominating paths in general.

\begin{thm}
\label{thm:chordal}
If $G$ is a bridge-less chordal graph, then $rc(G) \leq 3.rad(G)$. Moreover, there exists a bridge-less chordal graph with $rc(G) = 3.rad(G)$.
\end{thm}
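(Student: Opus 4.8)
The plan is to deduce this from Theorem~\ref{thm:twoway}. Since $G$ is bridge-less, $\delta(G)\ge 2$, so every connected dominating set of $G$ is a connected two-way dominating set; it therefore suffices to construct a connected dominating set $D$ of $G$ with $rc(G[D])\le 3\,rad(G)-3$, because then $rc(G)\le rc(G[D])+3\le 3\,rad(G)$. Fix a centre $v$ of $G$, so $ecc(v)=rad(G)=:\rho$, and let $L_i:=N^i(v)$ for $0\le i\le\rho$ be the distance layers. The target is a $D$ with $v\in D\subseteq L_0\cup\dots\cup L_{\rho-1}$ (so the outermost layer is only dominated, not entered) such that $G[D]$ inherits the BFS layering of $G$, i.e. $d_{G[D]}(v,u)=j$ whenever $u\in D\cap L_j$; for $\rho=1$ one simply takes $D=\{v\}$.

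I would build $D$ outward from $v$, one layer at a time, using the classical fact that minimal vertex separators of a chordal graph are cliques: for every connected component $C$ of the ``far part'' $G[L_j\cup L_{j+1}\cup\dots]$ the set $N(C)\cap L_{j-1}$ is a minimal separator, hence a clique, so its vertices in $D$ lie inside a single connected component of $G[D\cap L_{j-1}]$. This lets the newly chosen vertices of layer $j$ be attached to the part of $D$ already constructed by edges lying entirely inside $L_{j-1}\cup L_j$, so the layering is preserved; and it forces $G[D]$ to be a \emph{layered tree of cliques} -- each connected component (``block'') of $G[D\cap L_j]$ is a clique whose neighbourhood in $L_{j-1}$ is contained in a single block of the previous layer (its parent), and those are the only edges. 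Such a graph should be rainbow colourable with three colours per level $j\in\{1,\dots,\rho-1\}$: one colour for the (clique) edges used to traverse the interior of a level-$j$ block, and two colours for the level-$j$ ``downward'' edges joining a level-$j$ block to its parent; a vertex $x$ in a level-$i$ block and a vertex $y$ in a level-$j$ block are then joined by a rainbow path that climbs down the block tree from $x$, passes through the deepest common ancestor block, and climbs up to $y$, using at each level a downward edge (taking, of the two available colours, one on the way down and the other on the way up) and, where unavoidable, the traversal colour of a block.

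The main obstacle is making that last sentence literally correct: on each of the two halves of such a path one may have to cross the interior of a block at several levels, and since the ``traversal'' colour is shared within a level, a careless routing repeats it. Overcoming this needs the block tree of $G[D]$ to carry some redundancy -- roughly, each internal block should contain a vertex adjacent simultaneously to its parent block and to the relevant child block, so that the interior of that block need not be crossed -- and $D$ may have to be enlarged during its construction to guarantee this; here is exactly the point at which one uses that $G$ itself, not merely $G[D]$, is bridge-less and chordal. A careful bookkeeping then shows the colouring above is rainbow, which, combined with Theorem~\ref{thm:twoway}, yields $rc(G)\le 3\,rad(G)$. For the tightness claim, the friendship graph $F_5$ (five triangles sharing a common vertex) is bridge-less, chordal, and has $rad(F_5)=1$ and $rc(F_5)=3$: colouring every triangle identically with colours $1,2,3$ gives $rc(F_5)\le 3$, while a short case analysis (each two-coloured triangle has a repeated edge, which severely restricts how its two outer vertices can reach the centre) shows that two colours cannot suffice once there are at least five triangles.
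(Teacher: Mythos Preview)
Your overall strategy---fix a centre $v$, take a connected dominating set inside the first $\rho-1$ BFS layers, and then invoke Theorem~\ref{thm:twoway}---is exactly the right shape, and your tightness example $F_5$ is fine (the paper uses a different radius-$1$ example, namely $K_{2,n-1}$ plus an edge between the two high-degree vertices, but yours works too; in fact $F_3$ already suffices).

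The gap is the middle step: you never actually bound $rc(G[D])$. You identify the obstacle yourself---in your proposed $3$-colours-per-level scheme, climbing through several internal blocks on the same side of the path will repeat the ``traversal'' colour---and then wave it away with ``$D$ may have to be enlarged \ldots\ careful bookkeeping then shows the colouring above is rainbow.'' That is precisely the hard part, and nothing you have written indicates how bridge-lessness would furnish the required redundancy in the block tree; the claim that $N(C)\cap L_{j-1}$ is a \emph{minimal} separator also needs justification. As it stands, this is an outline with the key step missing.

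The paper sidesteps all of this with a much simpler device: \emph{take $D$ to be the whole of $L_0\cup\cdots\cup L_{\rho-1}$ and induct on the radius}. The point is a short lemma: if $v$ has eccentricity $r$ in a bridge-less chordal graph $G$, then $G':=G[\bigcup_{i=0}^{r-1}N^i(v)]$ is again bridge-less and chordal. Chordality is inherited by induced subgraphs; for bridge-lessness, suppose $b=(x,y)$ were a bridge of $G'$. It is not a bridge of $G$, so a shortest $x$--$y$ path $P$ in $G\setminus b$ exists, and $P\cup\{b\}$ is an induced cycle. Since $G'\setminus b$ is disconnected, $P$ must visit $N^r(v)$; but the endpoints $x,y$ lie in layers $\le r-1$, so $P$ has length at least $3$, giving an induced cycle of length $\ge 4$ in $G$, contradicting chordality. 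Now $G'$ has radius at most $r-1$, so by induction $rc(G')\le 3(r-1)$; and since $G$ is bridge-less, $\delta(G)\ge 2$, so $V(G')$ is a connected two-way dominating set of $G$, and Theorem~\ref{thm:twoway} gives $rc(G)\le rc(G')+3\le 3r$. No explicit colouring of the inner part is ever needed---the induction hypothesis supplies it.
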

Proof is given in Section \ref{sec:unitinterval}. The main idea is that we can induct on the radius of the graph and use Theorem \ref{thm:twoway} to prove the induction step.

Theorem \ref{thm:specialclass}$(i)$ gives $rc(G) \leq diam(G) + 1$ for every unit interval graph $G$. We have a stronger result, using a different approach.
\begin{thm}
\label{thm:unitinterval}
If $G$ is a unit interval graph such that $\delta(G) \geq 2$, then $rc(G) = diam(G)$.
\end{thm}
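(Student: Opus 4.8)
The lower bound $rc(G) \ge diam(G)$ holds for every connected graph, so the entire task is to produce a rainbow colouring with $diam(G)$ colours. The plan is to use the \emph{indifference ordering} $v_1, v_2, \ldots, v_n$ of the unit interval graph $G$ --- the canonical vertex order of a proper interval representation --- in which $N[v_i] := N(v_i) \cup \{v_i\}$ is a set of consecutive vertices for every $i$, and the largest index $r_i$ with $v_{r_i} \in N[v_i]$ is non-decreasing in $i$. First I would record the combinatorics of breadth-first search from the left endpoint $v_1$. Writing $B_t$ for the ball of radius $t$ around $v_1$, the identity $B_{t+1} = \bigcup_{v_i \in B_t} N[v_i]$ shows inductively (a union of overlapping initial intervals is again an initial interval) that each $B_t$ is a \emph{prefix} $\{v_1, \ldots, v_{m_t}\}$ of the ordering. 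Three consequences follow: the layers $L_t := B_t \setminus B_{t-1}$ are contiguous blocks occurring left to right; the map $i \mapsto d(v_1, v_i)$ is non-decreasing, and a short ``greedy jump'' estimate (namely $d(v_i, v_j) \le d(v_1, v_j)$ whenever $i \le j$, using that $r$ is non-decreasing) yields $ecc(v_1) = diam(G) =: d$; and, crucially, \emph{each layer $L_t$ induces a clique}.

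This clique property is exactly what distinguishes unit interval graphs from arbitrary interval graphs --- for the latter a BFS layer can be an independent set, which is what forces the extra ``$+1$'' in Theorem~\ref{thm:specialclass}$(i)$. I would prove it directly: if $v_a, v_b \in L_t$ with $a < b$ and $t \ge 1$, pick a neighbour $v_{b'}$ of $v_b$ that lies in $L_{t-1}$; since $L_{t-1}$ lies wholly to the left of $L_t$ we get $b' < a$, hence $r_{b'} \le r_a$ by monotonicity, and then $b \le r_{b'} \le r_a$ forces $v_a v_b \in E(G)$.

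With this structure the colouring writes itself. Colour every edge $uv$ with $c(uv) := \max\bigl(d(v_1,u),\, d(v_1,v)\bigr)$. Since every edge $uv$ satisfies $|d(v_1,u) - d(v_1,v)| \le 1$ and no edge joins two vertices of $L_0 = \{v_1\}$, this uses only the colours $1, \ldots, d$. To verify rainbow connectivity, take vertices $v_i, v_j$ and set $s := d(v_1, v_i) \le t := d(v_1, v_j)$; fix any shortest $v_1$--$v_j$ path $w_0 = v_1, w_1, \ldots, w_t = v_j$, noting that necessarily $w_k \in L_k$ for every $k$. Now follow the path $v_i, w_s, w_{s+1}, \ldots, w_t$: the first edge $v_i w_s$ is present because $v_i, w_s \in L_s$ and $L_s$ is a clique (or $v_i = w_s$, in which case it is simply dropped), and it carries colour $s$, while $w_k w_{k+1}$ carries colour $k+1$; so the colours along this path are exactly $s, s+1, \ldots, t$, all distinct. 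Hence every pair is rainbow connected and $rc(G) \le d = diam(G)$.

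The only genuine work lies in the three structural consequences of the prefix property, of which the clique property of the layers is the linchpin; once it is in hand, the colouring and its verification are routine. I would also note in passing that the argument as sketched never uses $\delta(G) \ge 2$ --- indeed $rc(G) = diam(G)$ already holds for paths --- so that hypothesis is retained here only for uniformity with the other statements in this section.
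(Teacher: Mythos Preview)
Your proof is correct. In spirit it is close to the paper's, but the two use genuinely different clique decompositions and colouring rules, so a brief comparison is in order.

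The paper fixes a shortest path $x_1,\ldots,x_k$ from the leftmost to the rightmost interval and takes the ``point cliques'' $S_i := \{u : l(u) \le r(x_i) \le r(u)\}$, which overlap and together span $G$ (this is where the unit-length hypothesis enters for them). Edges are coloured by the least $i$ with the edge inside $G[S_i]$, and rainbow connectivity is obtained indirectly: any shortest path in the auxiliary spanning subgraph $H=\bigcup_i G[S_i]$ can use at most one edge per clique, hence per colour. Your argument instead uses the BFS layers $L_t$ from the left end of an indifference ordering, proves the key unit-interval fact that each $L_t$ is itself a clique, and colours by $c(uv)=\max(d(v_1,u),d(v_1,v))$. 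This buys you an explicit rainbow path $v_i,w_s,w_{s+1},\ldots,w_t$ rather than an existence argument via shortest paths in $H$, and the layers partition $V(G)$ rather than overlap, which makes the colour count immediate.

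Your remark that the hypothesis $\delta(G)\ge 2$ is never used is also correct; the paper's proof does not invoke it either, and the conclusion $rc(G)=diam(G)$ indeed holds for every connected unit interval graph.
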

Proof is given in Section \ref{sec:unitinterval}

The extension of the idea of two-way domination to two-way two-step domination is what gives the remaining results. We need to make one more definition to state the next major theorem (Theorem \ref{thm:twowaytwostep}) in its full generality. 

\begin{defn}[Two-way two-step dominating set]
A (connected) two-step dominating set $D$ of vertices in a graph $G$ is called a {\em $($connected$)$ two-way two-step dominating set} if (i) every pendant vertex of $G$ is included in $D$ and (ii) every vertex in $N^2(D)$ has at least two neighbours in $N^1(D)$.
\end{defn}

\begin{rem}
As in the two-way ($1$-step) dominating set, here too every vertex $v \in V(G) \backslash D$ has two edge disjoint paths into $D$. Hence the adjective ``two-way''. It may be noted that, just like pendant edges, no two bridges in a graph can be coloured the same in any rainbow colouring. Hence the restriction of two-way domination is in some sense necessary to obtain colouring strategies which use only a constant number of extra colours outside the dominating set. 
\end{rem}

\begin{thm}
\label{thm:twowaytwostep}
If $D$ is a connected two-way two-step dominating set in a graph $G$, then
$$ rc(G) \leq rc(G[D]) + 6. $$
\end{thm}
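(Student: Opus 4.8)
The plan is to mimic the proof of Theorem~\ref{thm:twoway} but with a more careful accounting of the extra colours needed to handle the second layer $N^2(D)$. Let $D$ be a connected two-way two-step dominating set, and write $D_1 = N^1(D)$, $D_2 = N^2(D)$, so that $V(G) = D \cup D_1 \cup D_2$. First I would rainbow colour $G[D]$ optimally using $rc(G[D])$ colours. The goal is then to extend this to a rainbow colouring of all of $G$ using only $6$ fresh colours, say from the palette $\{c_1,\dots,c_6\}$, in such a way that every pair of vertices of $G$ is joined by a rainbow path. The natural strategy is to route any such path so that it travels from its endpoints ``inward'' to $D$, across $D$ along an already-rainbow path, and back out; the fresh colours are spent only on the at-most-two-edge detours at each end, so we must make sure these detour edges never conflict.

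The key structural step is to choose, for the $1$-step layer, a spanning forest of edges from $D$ to $D_1$ that hits every vertex of $D_1$ exactly once — i.e.\ assign each $v \in D_1$ a unique ``parent'' $p(v) \in D$ — and to two-colour these parent-edges with $\{c_1, c_2\}$ so that edges sharing an endpoint in $D$ (there may be many $D_1$-vertices with the same parent) get alternating colours; more robustly, one can partition $D_1$ by parent and alternate, or simply give all parent-edges colour $c_1$ and also keep colour $c_2$ available to fix the one place a path might reuse $c_1$. For the $2$-step layer, each $w \in D_2$ has, by the two-way hypothesis, \emph{two} neighbours in $D_1$; pick two such neighbours and colour the two edges with $c_3$ and $c_4$ respectively (consistently across all of $D_2$, so $w$-to-$D_1$ ``first choice'' edges are all $c_3$ and ``second choice'' edges all $c_4$). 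The remaining edges of $G$ — all other edges within $D_1$, within $D_2$, and between the layers — can be coloured arbitrarily, say all with $c_1$, since we will never use them on a rainbow path. That leaves $c_5, c_6$ as a reserve for resolving the handful of remaining collisions, exactly as two extra colours beyond the four ``structural'' ones.

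With the colouring fixed, I would verify rainbow connectivity by cases on where the two endpoints $u,v$ lie. If both are in $D$, use the rainbow path inside $G[D]$. If $u \in D$ and $v \in D_1$, go $v \to p(v)$ (colour $c_1$) then along a rainbow $D$-path to $u$, first re-routing if that $D$-path happens to start with the edge $p(v)u$ or if $u=p(v)$; the reserve colours and the second parent-colour $c_2$ handle the degenerate cases. The genuinely delicate case is $u, v \in D_2$ with $u \neq v$: the path must be $u \to u_1 \to u_2 \to (\text{rainbow $D$-path}) \to v_2 \to v_1 \to v$, where $u_1,v_1 \in D_1$ and $u_2,v_2 \in D$, using colours $c_3, c_1$ on the way in and $c_1, c_4$ on the way out — here the two $c_1$'s on parent-edges at the two ends collide, which is why we need the alternate parent-colour $c_2$: recolour one of the two parent-edges (or choose the $c_2$-coloured parent-edge for one end), and if $w$'s two $D_1$-neighbours force a further clash, spend $c_5$ or $c_6$. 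One also has to handle $u \in D_1, v \in D_2$ and the symmetric mixed cases, plus the possibility that the inward paths from $u$ and $v$ share a vertex of $D$ or $D_1$ — this is where the ``two edge-disjoint paths into $D$'' consequence of two-way domination is used to pick non-interfering routes.

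The main obstacle I expect is precisely this bookkeeping in the $D_2$–$D_2$ case: ensuring that the (up to) four detour edges at the two ends, together with any collisions between a detour edge and the first/last edge of the chosen $D$-path, can always be simultaneously de-conflicted with a palette of size exactly $6$. The cleanest way to control this is to prove a small \emph{local} lemma first: for any two vertices $x, y$ in $D_1 \cup D_2$ one can choose inward walks $W_x, W_y$ to $D$, each of length $\le 2$, landing at vertices $x', y' \in D$, such that $W_x \cup W_y$ uses at most $4$ edges, all of distinct colours drawn from $\{c_1,\dots,c_6\}$, and moreover $x'$ (resp.\ $y'$) can be taken to avoid being an internal bottleneck — then glue in the $G[D]$-rainbow path between $x'$ and $y'$, shortening it if it meets $W_x \cup W_y$. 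Establishing that lemma, leveraging the two-way property to get the needed slack, is the crux; the rest is routine case analysis. Full details are deferred to Section~\ref{sec:twowaytwostep}.
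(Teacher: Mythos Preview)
Your sketch has a genuine gap at the point you yourself flag as ``the crux.'' A rainbow colouring must be \emph{fixed in advance}; you cannot ``recolour one of the two parent-edges'' or ``spend $c_5$ or $c_6$'' on a per-path basis. So the question is whether your scheme --- each $v\in D_1$ gets a single parent-edge coloured $c_1$ or $c_2$, each $w\in D_2$ gets two legs coloured $c_3,c_4$ --- can be made into a fixed colouring that works. It cannot, as described. For $u,v\in D_2$ the intended path is $u\hbox{--}u_1\hbox{--}p(u_1)\hbox{--}\cdots\hbox{--}p(v_1)\hbox{--}v_1\hbox{--}v$ with outer colours $c_3,c_4$ and inner parent-edges from $\{c_1,c_2\}$. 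You need the two parent-edges to receive different colours, but your ``alternate among siblings sharing a parent'' rule says nothing when $p(u_1)\neq p(v_1)$; and there is no freedom to choose $u_1$ or $v_1$ so as to land on a prescribed parent-colour, since a vertex of $D_1$ has only one parent-edge and $u$'s two $D_1$-neighbours may both have $c_1$-coloured parent-edges. Your proposed local lemma is then precisely the theorem in disguise, and nothing in the proposal indicates how to prove it.

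What the paper does that you are missing is to choose the bipartition of $N^1(D)$ \emph{compatibly with the second layer}: it builds an auxiliary graph $H$ on $N^1(D)$ in which $x,y$ are adjacent whenever they are adjacent in $G$ \emph{or} share a common neighbour in $N^2(D)$, then takes $X,Y$ from a spanning forest of $H$ and colours $X$--$D$ legs with $k{+}1$ and $Y$--$D$ legs with $k{+}2$. This guarantees that many vertices of $N^2(D)$ (the set $A$) have one foot in $X$ and one in $Y$, so the two inner legs on a $D_2$--$D_2$ path automatically carry different colours; the remaining vertices of $N^2(D)$ (the set $B$) are then handled uniformly with the last two colours $k{+}5,k{+}6$. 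That $X/Y$ split driven by $H$, together with the $A/B$ split of $N^2(D)$, is the structural idea your argument needs and does not have.
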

Proof is given in Section \ref{sec:twowaytwostep}

\begin{thm}
\label{thm:domsize}
(i) Every connected graph $G$ of order $n$ and minimum degree $\delta$ has a connected two-step dominating set $D$ of size at most $\frac{3(n - |N^2(D)|)}{\delta + 1} - 2$. (ii) Every connected graph $G$ of order $n \geq 4$ and minimum degree $\delta$ has a connected two-way two-step dominating set $D'$ of size at most $\frac{3n}{\delta + 1} - 2$. Moreover, for every $\delta \geq 2$, there exist infinitely many connected graphs $G$ such that $\gamma^2_c(G) \geq \frac{3(n-2)}{\delta + 1} - 4$.
\end{thm}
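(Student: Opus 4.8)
The plan is to build $D$ by starting from a single vertex and repeatedly appending a shortest path of length $3$ to the current set. Begin with $D_0 = \{v_0\}$ for an arbitrary $v_0$. If a connected $D_i$ is not yet a $2$-step dominating set, there is a vertex at distance $\ge 3$ from $D_i$, and taking a shortest path from $D_i$ to it yields a vertex $w$ with $d(w, D_i) = 3$; if $u\text{-}x\text{-}y\text{-}w$ is a shortest $D_i$--$w$ path, set $D_{i+1} = D_i \cup \{x, y, w\}$, which is again connected. Throughout I would maintain the invariant
\[
3\,|D_i \cup N^1(D_i)| \;\ge\; (\delta + 1)\,(|D_i| + 2).
\]
The base case holds because $|D_0 \cup N^1(D_0)| = 1 + degree(v_0) \ge \delta + 1$. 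For the inductive step, the set $\{w\} \cup N(w)$ has at least $\delta + 1$ elements, all at distance $\ge 2$ from $D_i$ (hence outside $D_i \cup N^1(D_i)$) and all inside $D_{i+1} \cup N^1(D_{i+1})$; so adding $3$ vertices to $D$ increases $|D \cup N^1(D)|$ by at least $\delta + 1$, which preserves the invariant. The process halts with a connected $2$-step dominating set $D$, for which $|D \cup N^1(D)| = n - |N^2(D)|$, and the invariant rearranges to $|D| \le \frac{3(n - |N^2(D)|)}{\delta + 1} - 2$.

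\textbf{Part (ii).} I would run the same greedy but keep going until $D$ is a \emph{two-way} two-step dominating set, adding two further kinds of moves, each designed to preserve the very same invariant $3\,|D \cup N^1(D)| \ge (\delta + 1)(|D| + 2)$. If every vertex is within distance $2$ of $D$ but some $w \in N^2(D)$ has only one neighbour $w'$ in $N^1(D)$, add $\{w', w\}$ to $D$ (still connected, since $w'$ has a neighbour in $D$); this $2$-vertex move brings in $w$ together with its $\ge \delta - 1$ neighbours lying in $N^2(D)$, i.e.\ at least $\delta$ new vertices, and $3\delta \ge 2(\delta + 1)$ exactly when $\delta \ge 2$, so the invariant survives. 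When $\delta \ge 2$ there are no pendant vertices at all, so once neither move applies the set $D' := D$ is a connected two-way two-step dominating set, and $|D' \cup N^1(D')| = n - |N^2(D')| \le n$ turns the invariant into $|D'| \le \frac{3n}{\delta + 1} - 2$. For $\delta = 1$ the pendant vertices must additionally be forced in: the remedy is to add, whenever a vertex $z$ enters $D$, every pendant path hanging at $z$ simultaneously, and to start the greedy at a non-pendant vertex. A bounded amount of bookkeeping — this is where the hypothesis $n \ge 4$ is used — shows the amortization still holds.

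\textbf{Part (iii): tightness via a cyclic chain of cliques.} For the lower bound I would take, for each large $m$ and when $3 \mid \delta + 1$, cliques $B_0, \dots, B_{m-1}$ of size $r = (\delta + 1)/3$ arranged around a cycle, joining every vertex of $B_i$ to every vertex of $B_{i-1}$ and of $B_{i+1}$ (indices mod $m$). Every vertex then has degree $(r - 1) + r + r = \delta$, so $\delta(G) = \delta$ and $n = mr$, while $B_i$ and $B_j$ lie at distance exactly the cyclic distance of $i$ and $j$. If $D$ is any connected $2$-step dominating set, then by connectedness the set of indices of blocks $D$ meets is a cyclic interval, and by $2$-step domination that interval can omit at most $4$ blocks; hence $D$ meets at least $m - 4$ blocks, giving $|D| \ge m - 4 = \frac{3n}{\delta + 1} - 4 \ge \frac{3(n - 2)}{\delta + 1} - 4$, and letting $m \to \infty$ supplies infinitely many examples. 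When $\delta + 1$ is not a multiple of $3$ I would use blocks of size $\lfloor (\delta + 1)/3 \rfloor$ or $\lceil (\delta + 1)/3 \rceil$ and thin out the inter-block edges (or locally modify a bounded number of blocks) so the minimum degree is exactly $\delta$; the resulting discrepancy is absorbed by the additive constants in the claimed bound.

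\textbf{Expected main obstacle.} The delicate point is the bookkeeping in part (ii), especially the $\delta = 1$ case: a pendant vertex together with its private ``bridge'' vertices is forced into $D$ yet contributes almost nothing to $|D \cup N^1(D)|$, so one must be careful to charge these vertices to the move that adds their common high-degree ancestor, so that the amortization $3\,|D \cup N^1(D)| \ge (\delta + 1)(|D| + 2)$ is never violated. In part (iii) the only real subtlety is realizing minimum degree exactly $\delta$ for all residues of $\delta$ modulo $3$ without disturbing the block structure on which the distance estimate rests.
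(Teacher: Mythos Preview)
Your argument for parts (i) and (ii) when $\delta \geq 2$ is correct and is exactly the paper's proof, recast as an invariant; the paper runs the same ``add a shortest path of length $3$'' stage followed by the same ``add a length-$2$ path to a bad vertex of $N^2(D)$'' stage and does the identical arithmetic. Your invariant $3\,|D \cup N^1(D)| \ge (\delta+1)(|D|+2)$ is just a repackaging of the paper's count $|D| = 3k_1+1$, $|D \cup N^1(D)| \ge (k_1+1)(\delta+1)$, and the Stage~2 inequality $3\delta \ge 2(\delta+1)$ is precisely the step where the paper verifies $\frac{2|N^2(D)|}{\delta} \le \frac{3|N^2(D)|}{\delta+1}$.

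Where you diverge is the $\delta \le 1$ case, which you flag as the ``expected main obstacle'' and propose to handle by charging pendant paths to high-degree ancestors. This is unnecessary: the paper disposes of $\delta \le 1$ in one line, and indeed for $\delta = 1$ the set $D' = V(G)$ already works, since $n \le \tfrac{3n}{2} - 2$ is equivalent to $n \ge 4$, which is the hypothesis. So the obstacle you anticipate is not real, and the elaborate amortization you sketch for pendant paths is not needed.

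For the tightness in part (iii), the paper takes a different and shorter route: it simply cites known constructions (from Erd\H{o}s et al.\ and Caro et al.) of graphs with $diam(G) = \frac{3(n-2)}{\delta+1} - 1$ and invokes the easy inequality $diam(G) \le \gamma_c^2(G) + 3$. Your explicit cyclic chain-of-cliques construction is a perfectly valid alternative and is in fact essentially the construction those references use; your analysis of it is correct, though the modification for $\delta + 1 \not\equiv 0 \pmod 3$ would need to be written out carefully rather than left as ``absorbed by the additive constants''.
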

Proof is given in Section \ref{sec:domsize}

It is easy to see that the radius of any connected graph is at most $k$ more than the radius of its $k$-step connected dominating set. Moreover, the radius of any graph $H$ is at most $|H|/2$. Hence the following corollary is also immediate. 

\begin{cor}
\label{cor:radius}
For every connected graph $G$ of order $n$ and minimum degree $\delta$, 
$$rad(G) \leq \frac{3n}{2(\delta + 1)} + 1.$$
\end{cor}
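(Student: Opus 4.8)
The plan is to read this off from Theorem~\ref{thm:domsize} together with two elementary observations about radius already hinted at in the surrounding text. Write $\delta=\delta(G)$. First I would produce a small connected two-step dominating set: by Theorem~\ref{thm:domsize}(i), $G$ has a connected two-step dominating set $D$ with
$$|D|\ \le\ \frac{3\bigl(n-|N^2(D)|\bigr)}{\delta+1}-2\ \le\ \frac{3n}{\delta+1}-2.$$
(If one prefers to invoke Theorem~\ref{thm:domsize}(ii) instead, it gives the same bound for $n\ge 4$, and the cases $n\le 3$ are trivial since then $rad(G)\le 1$ while the right-hand side of the corollary is at least $1$.)

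The second ingredient is the claim that $rad(H)\le |H|/2$ for every connected graph $H$. I would prove this by taking a spanning tree $T$ of $H$ and using that distances do not decrease on passing to a spanning subgraph, so that $rad(H)\le rad(T)=\lceil diam(T)/2\rceil\le \lceil(|T|-1)/2\rceil\le |T|/2=|H|/2$. Applied to the connected graph $H=G[D]$ this yields $rad(G[D])\le |D|/2\le \frac{3n}{2(\delta+1)}-1$.

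Finally I would lift the radius back to $G$. Let $c$ be a central vertex of $G[D]$, so $ecc_{G[D]}(c)=rad(G[D])$. Given any $v\in V(G)$, since $D$ is a two-step dominating set there is a vertex $u\in D$ with $d_G(v,u)\le 2$, whence $d_G(v,c)\le d_G(v,u)+d_{G[D]}(u,c)\le 2+rad(G[D])$ (here $d_G(u,c)\le d_{G[D]}(u,c)$ because $G[D]$ is a subgraph). Taking the maximum over $v$ gives $rad(G)\le ecc_G(c)\le rad(G[D])+2\le \frac{3n}{2(\delta+1)}+1$, as desired.

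I do not expect any genuine obstacle here: the corollary is a short consequence of Theorem~\ref{thm:domsize}, with all of the real work residing in that theorem (whose proof is deferred). The only points needing a little care are the trivial treatment of small orders and the ceiling/floor arithmetic in the bound $rad(H)\le|H|/2$, both of which are routine.
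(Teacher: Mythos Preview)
Your proposal is correct and follows exactly the approach the paper indicates in the sentences preceding the corollary: bound $|D|$ via Theorem~\ref{thm:domsize}, use $rad(G[D])\le |D|/2$, and then $rad(G)\le rad(G[D])+2$ since $D$ is a connected $2$-step dominating set. You have simply filled in the details the paper leaves to the reader.
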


This bound marginally improves the one reported in \cite{erds1989radius}, namely $\frac{3(n-3)}{2(\delta + 1)} + 5$ and the proof is shorter. Note that we can similarly upper bound the diameter of $G$ by $\frac{3n}{\delta + 1} + 1$. But the corresponding bound reported in \cite{erds1989radius} is better, namely $\frac{3n}{\delta +1} -1$.

\begin{thm}
\label{thm:rainmindeg}
For every connected graph $G$ of order $n$ and minimum degree $\delta$,
$$rc(G) \leq \frac{3n}{\delta + 1} + 3.$$
Moreover, for every $\delta \geq 2$, there exist infinitely many graphs $G$ such that $rc(G) \geq \frac{3(n-2)}{\delta + 1} - 1$.
\end{thm}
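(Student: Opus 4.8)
The upper bound is an immediate corollary of the two main technical theorems, so the plan for it is short. First I would dispose of the range $n \le 3$ by hand: there $G$ is one of $K_2$, $P_3$, $K_3$, so $rc(G) \le 2$, whereas $\frac{3n}{\delta+1} + 3 \ge 6$ (any graph of minimum degree $\delta$ has at least $\delta+1$ vertices, so $\frac{3n}{\delta+1} \ge 3$). For $n \ge 4$, Theorem \ref{thm:domsize}$(ii)$ supplies a connected two-way two-step dominating set $D$ with $|D| \le \frac{3n}{\delta+1} - 2$; since $G[D]$ is connected it has a spanning tree, so $rc(G[D]) \le |D| - 1$; and Theorem \ref{thm:twowaytwostep} then gives
$$ rc(G) \;\le\; rc(G[D]) + 6 \;\le\; |D| + 5 \;\le\; \frac{3n}{\delta+1} + 3. $$
That settles the upper bound with no further work.

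For the tightness claim I would reuse the very graphs that witness the ``moreover'' part of Theorem \ref{thm:domsize}. Those graphs are, in effect, ``fattened paths'': the vertex set splits into consecutive distance layers $L_0, L_1, \ldots, L_D$ from $L_0$, the layers whose index is a multiple of $3$ having size $\delta - 1$ and the remaining layers size $1$ (the two end layers thickened so that the minimum-degree condition holds everywhere), with edges only within layers or between consecutive layers. For such a graph one checks that any connected two-step dominating set has a vertex within distance $2$ of each end and hence must occupy a connected run of all but at most the first two and the last two layers, which forces $diam(G) \ge \gamma^2_c(G) + 3$. Combining this with $\gamma^2_c(G) \ge \frac{3(n-2)}{\delta+1} - 4$ from Theorem \ref{thm:domsize} and with the trivial lower bound $rc(G) \ge diam(G)$ yields $rc(G) \ge \frac{3(n-2)}{\delta+1} - 1$; letting $D \to \infty$ produces infinitely many such graphs for each fixed $\delta \ge 2$. (This is precisely the family alluded to from \cite{caro2008rainbow}.)

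Almost nothing here is hard: the upper bound is pure assembly of Theorems \ref{thm:twowaytwostep} and \ref{thm:domsize}, and the lower bound leans on the extremal family and its two-step domination number already provided by Theorem \ref{thm:domsize}. The one place that needs genuine care is confirming the relation $diam(G) \ge \gamma^2_c(G) + 3$ (equivalently, a direct lower bound of $\frac{3(n-2)}{\delta+1} - 1$ on the diameter) for that family — a short but slightly fussy layer-counting argument that hinges on the two end layers being thickened exactly enough that the minimum degree is $\delta$ there without inflating $n$. I expect that verification to be the only real obstacle.
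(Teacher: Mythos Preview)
Your proof is correct and matches the paper's: the upper bound is exactly the assembly of Theorems~\ref{thm:twowaytwostep} and~\ref{thm:domsize}(ii) via $rc(G[D]) \le |D|-1$, and the lower bound comes from the same extremal family cited from \cite{caro2008rainbow}. One simplification: your detour through $\gamma^2_c$ for the lower bound is unnecessary, since the construction in \cite{caro2008rainbow} (see also the proof of Theorem~\ref{thm:domsize}) is presented directly as a family with $diam(G) = \tfrac{3(n-2)}{\delta+1} - 1$, so $rc(G) \ge diam(G)$ finishes immediately without the layer-counting you flag as ``the only real obstacle.''
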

\begin{proof}
Observe that the connected (two-way two-step dominating) set $D$ can be rainbow coloured using $|D| - 1$ colours by ensuring that every edge of some spanning tree gets distinct colours. So the upper bound follows immediately from Theorems \ref{thm:twowaytwostep} and \ref{thm:domsize}(ii). The family of tight examples is demonstrated in \cite{caro2008rainbow}.
\end{proof}

\begin{rem}
Theorem \ref{thm:rainmindeg} nearly settles the investigation for an optimal upper bound of rainbow connection number in terms of minimum degree which was initiated in \cite{caro2008rainbow}. There it was shown that, if $\delta(G) \geq 3$, then $rc(G) < 5n/6$. For general $\delta$, they had given two upper bounds viz., $(1 + o_{\delta}(1))n \ln(\delta) / \delta$ and $(4 \ln(\delta) + 3)n / \delta$. They had also shown a construction for a family of graphs with $diam(G) = \frac{3(n-2)} {\delta + 1} - 1$, leaving a gap of $\ln(\delta)$ factor between the bound and the construction.  They remarked that the problem of finding an optimum bound for $rc(G)$ in terms of $\delta$ is an intriguing problem and conjectured that for $\delta(G) \geq 3$, $rc(G) < 3n/4$. Schiermeyer proved the above conjecture and raised the question whether $rc(G) \leq 3n/(\delta + 1)$ for all values of $\delta$ \cite{schiermeyer2009rainbow}. If the answer is yes, then for graphs with linear minimum degree $\epsilon n$, the rainbow connection number is bounded by a constant. This was indeed shown to be the case in \cite{chakraborty2009hardness}. But their proof employed Szemer\'{e}di's Regularity Lemma and hence the bound was a tower function in $1/\epsilon$. This was considerably improved in \cite{krivelevich2010rainbow}, where it was shown that $rc(G) \leq 20n/ \delta$ for any connected graph. This is the best known bound for the problem till date. Theorem \ref{thm:rainmindeg} improves it and answers the question from \cite{schiermeyer2009rainbow} in affirmative but up to an additive constant of $3$. Moreover, this bound is seen to be tight up to additive factors by the construction mentioned in \cite{caro2008rainbow}.
\end{rem}

\section{Proofs}
\label{sec:proofs}

\subsection{Proof of Theorem \ref{thm:twoway}}
\label{sec:twoway}

{\em Statement.} 
If $D$ is a connected two-way dominating set in a graph $G$, then $rc(G) \leq rc(G[D]) + 3$.
\begin{proof}
We prove the theorem by demonstrating a rainbow colouring that uses at most $rc(G[D]) + 3$ colours. For $x \in N^1(D)$, its neighbours in $D$ will be called {\em foots} of $x$, and the corresponding edges will be called {\em legs} of $x$. Any rainbow path whose edge colours are contained in $\{1, 2, \cdots, k\}$ will be called a $k$-rainbow path.

Rainbow colour $G[D]$ using colours $\{1, 2, \cdots, k := rc(G[D])\}$. Let $H := G[V(G)\backslash D]$. Partition $V(H)$ into sets $X$, $Y$ and $Z$ as follows. $Z$ is the set of all isolated vertices of $H$. In every non-singleton connected component of $H$, choose a spanning tree. This gives a spanning forest on $V(H) \backslash Z$ with no isolated vertices. Choose $X$ and $Y$ as any one of the bipartitions defined by this forest. Colour every $X\-D$ edge with $k+1$, every $Y\-D$ edge with $k+2$ and every edge in $H$ with $k+3$. Since $D$ is a two-way dominating set, there are no pendant vertices outside $D$. Therefore, every vertex in $Z$ will have at least two legs. Colour one of them with $k+1$ and all the others with $k+2$.

We show that the above colouring is a rainbow colouring of $G$. For pairs in $D \times D$, there is already a $k$-rainbow path connecting them in $G[D]$. For a pair $(x, y) \in N^1(D) \times D$, join any leg of $x$ (coloured $k+1$ or $k+2$) with the $k$-rainbow path from the corresponding foot to $y$ in $G[D]$. For a pair $(x, y) \in (X \cup Z) \times (Y \cup Z)$ join a $k+1$ leg of $x$ and a $k+2$ leg of $y$ with a $k$-rainbow path between the corresponding foots in $G[D]$. For a pair in $(x, x') \in X \times X$, $x$ has a neighbour $y(x) \in Y$ from the spanning forest. Join the corresponding $x\-y(x)$ edge (coloured $k+3$) with the $y(x)\-x'$ $(k+2)$-rainbow path mentioned earlier. Similarly every pair $(y, y') \in Y \times Y$ is also rainbow connected. 
\end{proof}

\subsection{Proof of Theorem \ref{thm:twowaytwostep}}
\label{sec:twowaytwostep}

{\em Statement.} 
If $D$ is a connected two-way two-step dominating set in a graph $G$, then $rc(G) \leq rc(G[D]) + 6$.
\begin{proof}
We prove the theorem by demonstrating a rainbow colouring that uses at most $rc(G[D]) + 6$ colours. For $x \in N^k(D)$, its neighbours in $N^{k-1}(D)$, $k = 1,2$ will be called foots of $x$ and the corresponding edges will be called legs. Any rainbow path whose edge colours are contained in $\{1, 2, \cdots, k\}$ will be called a $k$-rainbow path.

Rainbow colour $G[D]$ using colours $\{1, 2, \cdots, k := rc(G[D])\}$. Construct a new graph $H$ on $N^1(D)$ with the edge set
\begin{eqnarray*}
E(H) 	&=& \{\{x,y\} | x, y \in N^1(D), \{x, y\} \in E(G) \textnormal{ or } \\
		&&	\exists z \in N^2(D) \textnormal{ such that } \{x, z\}, \{y, z\} \in E(G) \}.
\end{eqnarray*}

Recall that, in a two-way two-step dominating set $D$, there are no pendant vertices outside $D$ and every vertex in $N^2(D)$ has at least two neighbours in $N^1(D)$. Hence in the above graph $H$, the isolated vertices are only those which have all their neighbours (at least $2$) in $D$. Call their collection $Z$. Choose a spanning tree in every non-singleton connected component of $H$. This gives a spanning forest of $V(H) \backslash Z$ with no isolated vertices. Let $X$ and $Y$ be any bipartition defined by this forest. Colour every $X\-D$ edge with $(k+1)$ and every  $Y\-D$ edge with $(k+2)$. For every vertex in $Z$, colour one of its legs with $(k+1)$ and the remaining with $(k+2)$. Colour every edge of $G$ within $N^1(D)$ by $k+3$. Partition the vertices of $N^2(D)$ into $A$ and $B$ as follows. $A = \{x \in N^2(D) | N(x) \cap X \neq \emptyset \textnormal{ and } N(x) \cap Y \neq \emptyset \}$ and $B = N^2(D) \backslash A$. Colour every $A\-X$ edge with $(k+3)$ and every $A\-Y$ edge with $(k+4)$. First we claim that $G' := G[D \cup N^1(D) \cup A]$ is rainbow connected.

By following the same arguments as in proof of Theorem \ref{thm:twoway}, it can be easily seen that every pair in $D \times D$, is connected by a $k$-rainbow path and every pair in $N^1(D) \times D$ and $(X \cup Z) \times (Y \cup Z)$ is connected by a $(k+2)$-rainbow path. Notice that for every vertex $x \in X$, there exists $y(x) \in Y$ such that $x\-y(x)$ is an edge in the spanning forest. Vertices $x$ and $y(x)$ are connected either by a single $(k+3)$ edge or a $(k+3, k+4)$ path. Hence between any pair $(x, x') \in X \times X$, we can find a rainbow path by joining the $x\-y(x)$ path with the $y(x)\-x'$ $(k+2)$-rainbow path. Similarly any pair in $Y \times Y$ is also rainbow connected. Any pair $(a, a') \in A \times A$ can be rainbow connected by joining the $(k+3)$ leg of $a$ whose foot will be in $X$ and $(k+4)$ leg of $a'$ whose foot will be in $Y$ with the $(k+2)$-rainbow path between the two foots. Similarly we can connect any vertex $a \in A$ with any vertex in $x \in D \cup N^1(D)$ by using the $(k+3)$ leg of $a$ if $x \in Y$ and the $(k+4)$ leg of $a$ otherwise. Hence $G'$ is rainbow coloured using colours $1$ to $k+4$.  

Now only the vertices of $B$ remain. All of them have at least two neighbours in $G'$. Colour one edge to $G'$ with $k+5$ and all the other edges with $k+6$. It is easily seen that we now have a rainbow colouring of entire $G$.   
\end{proof}

\subsection{Proof of Theorem \ref{thm:domsize}}
\label{sec:domsize}

{\em Statement.} (i) Every connected graph $G$ of order $n$ and minimum degree $\delta$ has a connected two-step dominating set $D$ of size at most $\frac{3(n - |N^2(D)|)}{\delta + 1} - 2$. (ii) Every connected graph $G$ of order $n \geq 4$ and minimum degree $\delta$ has a connected two-way two-step dominating set $D'$ of size at most $\frac{3n}{\delta + 1} - 2$. Moreover, for every $\delta \geq 2$, there exist infinitely many connected graphs $G$ such that $\gamma^2_c(G) \geq \frac{3(n-2)}{\delta + 1} - 4$.
\begin{proof}
The case when $\delta \leq 1$ can be checked easily. So we assume $\delta \geq 2$ and execute the following two stage procedure.

\begin{enumerate}[{\bf Stage} $1$.]
\item
{
	$D = \{u\}$, for some $u \in V(G)$.\\ 
	While $N^3(D) \neq \emptyset$,\\
	\{\\
	\hspace*{5ex}Pick any $v \in N^3(D)$. Let $(v, x_2, x_1, x_0)$, $x_0 \in D$ be a shortest $v\-D$ path.\\
	\hspace*{5ex}$D = D \cup \{x_1, x_2, v \}$. \\ 
	\}
}
\item
{
	$D' = D$. \\
	While $\exists v \in N^2(D')$ such that $|N(v) \cap N^2(D')| \geq \delta - 1$,\\ 
	\{\\
	\hspace*{5ex}$D' = D' \cup \{x_1, v \}$  where $(v, x_1, x_0)$, $x_0 \in D'$ is a shortest $v\-D'$ path.\\
	\}
}
\end{enumerate}

Clearly $D$ remains connected after every iteration in Stage $1$. Since Stage $1$ ends only when $N^3(D) = \emptyset$, the final $D$ is a two step dominating set. Let $k_1$ be the number of iterations executed in Stage $1$. $|D \cup N^1(D)| \geq \delta + 1$ when Stage $1$ starts. Since a new vertex from $N^3(D)$ is added to $D$, $|D \cup N^1(D)|$ increases by at least $\delta + 1$ in every iteration. Therefore, when Stage $1$ ends, $k_1 + 1 \leq \frac{|D \cup N^1(D)|}{\delta + 1} = \frac{n - |N^2(D)|}{\delta + 1}$. Since $D$ starts as a singleton set and each iteration adds $3$ more vertices, $|D| = 3k_1 + 1 \leq  \frac{3(n - |N^2(D)|)}{\delta + 1} -2$. This proves Part (i) of the theorem.

$D'$ remains a connected two-step dominating set throughout Stage $2$. Stage $2$ ends only when every vertex $v \in N^2(D')$ has at most $\delta -2$ neighbours in $N^2(D')$. Hence at least two neighbours of $v$ are in $N^1(D')$. Moreover, since $\delta \geq 2$, there are no pendant vertices in $G$. So the final $D'$ is a connected two-way two-step dominating set. Let $k_2$ be the number of iterations executed in Stage $2$. Since we add to $D'$ a vertex who has at least $\delta - 1$ neighbours in $N^2(D')$, $|N^2(D')|$ reduces by at least $\delta$ in every iteration. Since we started with $|N^2(D)|$ vertices, $k_2 \leq \frac{|N^2(D)|}{\delta}$. Since we add $2$ vertices to $D'$ in each iteration, $|D'| = |D| + 2k_2 \leq \frac{3(n - |N^2(D)|)}{\delta + 1} - 2 + \frac{2|N^2(D)|}{\delta} \leq \frac{3n}{\delta + 1} -2$ for $\delta \geq 2$. This proves Part (ii). 

For every $\delta > 2$, construction for infinitely many graphs $G$ with $diam(G) = \frac{3(n-2)}{\delta + 1} - 1$ is reported in \cite{erds1989radius} and \cite{caro2008rainbow}. It is easy to see that for every graph $G$, $diam(G) \leq \gamma^2_c(G) + 3$. Hence $\gamma^2_c(G) \geq \frac{3(n-2)}{\delta + 1} -4$ for every graph in that family. 
\end{proof}

\subsection{Proof of Theorem \ref{thm:specialclass}}
\label{sec:specialclass}

Among the remarks made below Theorem \ref{thm:specialclass}, only (i), (ii) and (v) are non-trivial. Proof of (ii) can be found in \cite{corneil1997asteroidal}. We give proofs of (i) and (v) below.
\vspace{1ex}

\noindent {\em Statement} (i).
Every interval graph $G$ which is not isomorphic to a complete graph has a dominating path of length at most $diam(G) - 2$.
\begin{proof}
Consider an interval representation of $G = (V, E)$. For $u \in V$ let $l(u)$ and $r(u)$ represent the left end point and the right end point of $u$, respectively. Let $A = \min_{u \in V} r(u)$ and $B = \max_{u \in V} l(u)$. Let $a$ and $b$ be vertices such that $r(a) = A$ and $l(b) = B$. Let $S_1 = \{u \in V | l(u) \leq A\}$ and $S_2 = \{u \in V | r(u) \geq b\}$. Clearly $S_1$ and $S_2$ induce cliques in $G$ and thus we can assume that for each $u \in S_1$, $l(u) = A$ and for each $u \in S_2$, $r(u) = B$. Thus the intervals corresponding to $a$ and $b$ are point intervals. Since $G$ is not a complete graph, $A\neq B$. Moreover, since $G$ is connected, there exists $u \in S_1$ and $v \in S_2$ such that $u, v$ are not point intervals. Let $x \in S_1$ and $y \in S_2$ be two not necessarily distinct vertices such that the distance from $x$ to $y$ is minimum among all such pairs. Clearly $x \neq a$, $y \neq b$ and the shortest path $P$ between $x$ and $y$ is a dominating path in $G$. Moreover, since $a$ and $b$ are point intervals, $d(a, b) \geq d(x, y) + 2$. Hence length of $P$ is at most $d(a,b) -2 \leq diam(G) - 2$ as required.
\end{proof}

\noindent {\em Statement} (v).
Every circular arc graph $G$, which is not an interval graph, has a dominating cycle of diameter at most $diam(G)$.
\begin{proof}
Let $\mathcal{C}$ denote the circle in the circular arc representation of $G$. We will use the same symbol to denote a vertex of $G$ and its corresponding arc if there is no chance of confusion. Let $C_k =(c_1, c_2, \cdots, c_k)$ be a minimum collection of arcs that cover $\mathcal{C}$. It is easy to see that $C_k$ is a dominating cycle of $G$. We claim that $diam(C_k) \leq diam(G)$. 

For contradiction, let us assume $diam(C_k) > diam(G)$. Hence there exists $c_i, c_j \in V(C_k), \; i < j$, such that their distance in $G$ is less than their distance in $C_k$. Let $\mathcal{S}_1$ and $\mathcal{S}_2$ denote the two disjoint segments of $\mathcal{C} \backslash (c_i \cup c_j)$. Let $(c_i = x_0, x_1, \cdots, x_l, x_{l+1} = c_j)$ be a shortest $c_i\-c_j$ path in $G$. The set of arcs $X = (x_1, x_2, \cdots, x_l)$ will surely cover at least one of $\mathcal{S}_1$ or $\mathcal{S}_2$. Let $R := (c_{i+1}, c_{i+2}, \cdots c_{j-1})$ and $L := C_k \- (R \cup \{c_i, c_j\})$. Since arcs corresponding to $C_k$ cover the circle, the arcs corresponding one of them, say $L$  will cover the $\mathcal{S}_i$ not covered by $X$. By assumption $|L|, |R| > |X|= l$. So we can get a smaller collection of arcs covering $\mathcal{C}$ by replacing $R$ with $X$ in $C_k$ contradicting the minimality of $C_k$.
\end{proof}

\subsubsection*{Tight Examples}

We give examples to show that the upper bounds in $(i)$, $(iii)$ and $(iv)$ of Theorem \ref{thm:specialclass} are tight. We also give an example to show that the upper bound in $(ii)$ is nearly tight.

\begin{figure}[ht]
\begin{center}
\psset{xunit=1.5cm,yunit=1.5cm}
\begin{pspicture}(-1,-1)(7,1)
	\psset{labelsep=5pt,linewidth=0.5pt}
	\psdots[dotstyle=o, dotsize=5pt,fillstyle=solid, fillcolor=black]
		(-0.44, 0.88)(0.44, 0.88)(-0.44,-0.88)(0.44,-0.88)
		(0,0)(1,0)(2,0)(4,0)(5,0)(6,0)
		(5.56, 0.88)(6.44, 0.88)(5.56,-0.88)(6.44,-0.88)
	
	\psline[linecolor=black]{-}(0,0)(-0.44, 0.88)(0.44, 0.88)(0,0)(-0.44, -0.88)(0.44,-0.88)(0,0)(2.5,0)
	\psline[linecolor=black]{-}(6,0)(5.56, 0.88)(6.44, 0.88)(6,0)(5.56, -0.88)(6.44,-0.88)(6,0)(3.5,0)
	\psline[linecolor=black,linestyle=dashed](2.5,0)(3.5,0)
	
	\uput[l](0,0){$x_1$}
	\uput[d](1,0){$x_2$}
	\uput[d](2,0){$x_3$}
	
	\uput[d](4,0){$x_{d-3}$}
	\uput[d](5,0){$x_{d-2}$}
	\uput[r](6,0){$x_{d-1}$}

	\uput[u](-0.44,0.88){$y_{1}$}
	\uput[u](0.44,0.88){$y_{2}$}
	\uput[d](-0.44,-0.88){$y_{3}$}
	\uput[d](0.44,-0.88){$y_{4}$}

	\uput[u](5.56,0.88){$z_{1}$}
	\uput[u](6.44,0.88){$z_{2}$}
	\uput[d](5.56,-0.88){$z_{3}$}
	\uput[d](6.44,-0.88){$z_{4}$}

\end{pspicture}
\end{center}
\caption{Example of an interval graph with minimum degree $2$, diameter $d$ and rainbow connection number $d+1$.}
\label{fig:twinfan}
\end{figure}
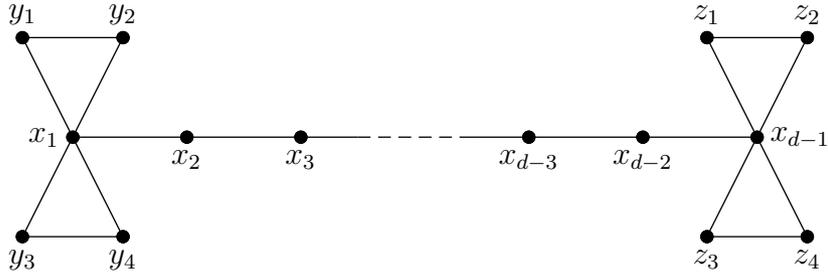

\begin{example}[An interval graph $G$ with $\delta(G) \geq 2$ and $rc(G) \geq d + 1$ for any given diameter $d$]
\label{ex:twinfan}
Consider the graph in Figure \ref{fig:twinfan}. It is an interval graph with minimum degree $2$ and diameter $d$. We claim that it cannot be rainbow coloured using $d$ colours. 

Let $Y = \{y_1, y_2, y_3, y_4\}$ and $Z = \{z_1, z_2, z_3, z_4\}$. Every pair $(y, z) \in Y \times Z$ is at a distance $d$ apart and they have only one $d$-length path between them. Hence every shortest $Y\-Z$ path should be rainbow coloured. So in any rainbow colouring which used only $d$ colours, every $Y\-x_1$ edge is forced to share the same colour. Hence there is no rainbow path between $y_1$ and $y_3$.
\end{example}

\begin{example}[An AT-free graph $G$ with $\delta(G) \geq 2$ and $rc(G) = diam(G) + 2$]
$K_{2,n}$, the complete bipartite graph with $2$ vertices in one part and $n$ in the other, is an AT-free graph with minimum degree $2$ and diameter $2$. For $n \geq 10$, its rainbow connection number is known to be $4$ (Theorem $2.6$ in \cite{chartrand2008rainbow}).
\end{example}

\begin{figure}[ht]
\begin{center}
\psset{xunit=1.5cm,yunit=1.5cm}
\begin{pspicture}(-0.5,-0.5)(7.5,1.5)
	\psset{labelsep=5pt,linewidth=0.5pt}
	\psdots[dotstyle=o, dotsize=5pt,fillstyle=solid, fillcolor=black]
        (3,0)(4,0)
		(0,1)(1,1)(2,1)(5,1)(6,1)(7,1)
	\psdots[dotstyle=o, dotsize=3pt]
		(3,1)(3.5,1)(4,1)
	
	\psline[linecolor=black]{-}(3,0)(0,1)(4,0)(1,1)(3,0)(2,1)(4,0)(5,1)(3,0)(6,1)(4,0)(7,1)(3,0)(4,0)

	\uput[d](3.5,0){$e$}
	
	\uput[d](3,0){$y_1$}
	\uput[d](4,0){$y_2$}

	\uput[u](0,1){$x_1$}
	\uput[u](1,1){$x_2$}
	\uput[u](2,1){$x_3$}
	
	\uput[u](5,1){$x_{n-3}$}
	\uput[u](6,1){$x_{n-2}$}
	\uput[u](7,1){$x_{n-1}$}

\end{pspicture}
\end{center}
\caption{Example of a threshold graph with minimum degree $2$ and rainbow connection number $3$. Also an example of a bridge-less chordal graph with $rc(G) = 3.rad(G)$.}
\label{fig:k2nplus}
\end{figure}
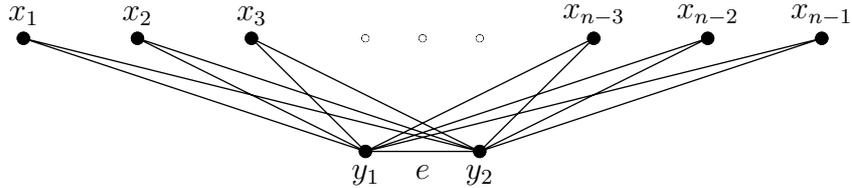
\begin{example}[A threshold graph $G$ with $\delta(G) \geq 2$ and $rc(G) \geq 3$]
\label{ex:threshold}
Consider the graph $G$ in Figure \ref{fig:k2nplus} which can be obtained by adding an edge $e$ between the two vertices in the smaller part of $K_{2,n-1}$, $n \geq 10$. It is easily seen to be a threshold graph (Two dominating vertices, $y_1$ and $y_2$, can be given a weight $1$, others a weight $0$ and threshold being $1$). For contradiction let us assume that $G$ can be coloured using $2$ colours. Subdividing $e$ gives a $K_{2,n}$. It is easy to see that by retaining the colour of $e$ to one of the two new edges and giving a third colour to the other is a rainbow colouring of $K_{2,n}$. This is a contradiction to the fact that $rc(K_{2,n}) = 4$ for $n \geq 10$.  (Theorem $2.6$ in \cite{chartrand2008rainbow}). 
\end{example}

\begin{example}[A chain graph $G$ with $\delta(G) \geq 2$ and $rc(G) \geq 4$]
$K_{2,n}$ is a chain graph with minimum degree $2$ and diameter $2$. For $n \geq 10$, it is known to have a rainbow connection number of $4$ (Theorem $2.6$ in \cite{chartrand2008rainbow}).  
\end{example}

\subsection{Proof of Theorem \ref{thm:chordal}}
\label{sec:chordal}

\begin{lem}
\label{lem:bridgelesschordal}
If $v$ is a vertex of eccentricity $r$ in a bridge-less chordal graph $G$, then $G[\bigcup_{i=0}^l{N^i(v)}]$ is a bridge-less chordal graph for all $l \in \{0, 1, \cdots, r\}$. 
\end{lem}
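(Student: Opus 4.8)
The statement has two parts: chordality and bridge-lessness of $G_l := G[\bigcup_{i=0}^l N^i(v)]$. Chordality is the easy half: the class of chordal graphs is closed under taking induced subgraphs, and $G_l$ is an induced subgraph of $G$, so this is immediate. The work is entirely in showing $G_l$ is bridge-less, and the natural strategy is a downward induction on $l$, peeling off one ``layer'' $N^l(v)$ at a time. So it suffices to prove: if $l \geq 1$ and $G_l$ is bridge-less and chordal, then $G_{l-1}$ is bridge-less (chordality of $G_{l-1}$ being free). Equivalently, deleting the last BFS layer from a bridge-less chordal graph, where the remaining vertices still form the first $l-1$ layers around $v$, leaves no bridge.

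First I would set up the induction and fix notation: write $L_i := N^i(v)$, so $G_l = G[L_0 \cup \cdots \cup L_l]$, and note every edge of $G$ joins vertices in the same layer or in consecutive layers (this is the standard BFS-layer property, using that distances are measured in the connected graph $G$; I should check this still holds inside $G_l$, i.e. that distances from $v$ are not increased by deleting farther layers — which is true because a shortest $v$–$x$ path for $x \in L_i$ with $i \le l$ stays within $L_0 \cup \cdots \cup L_i \subseteq V(G_l)$). Then suppose for contradiction that $e = \{a, b\}$ is a bridge of $G_{l-1}$, with (say) $a \in L_i$, $b \in L_j$, $|i - j| \le 1$. In $G_l$ the edge $e$ is not a bridge, so there is a cycle $C$ in $G_l$ through $e$. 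Since $e$ separates $G_{l-1}$, the cycle $C$ must leave $G_{l-1}$, i.e. $C$ uses at least one vertex of $L_l$. Consider a maximal sub-path of $C$ whose internal vertices all lie in $L_l$; its two endpoints $x, y$ lie in $L_{l-1}$ (by the consecutive-layer property, neighbours of $L_l$-vertices are in $L_{l-1} \cup L_l$), and its internal vertices $w_1, \dots, w_t$ (with $t \ge 1$) lie in $L_l$. The plan is to ``short-cut'' this excursion: replace the sub-path $x w_1 \cdots w_t y$ by a path between $x$ and $y$ that stays inside $G_{l-1}$, which would yield a closed walk in $G_{l-1}$ through $e$ containing a cycle through $e$ — contradicting that $e$ is a bridge of $G_{l-1}$.

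The crux — and the place I expect the real difficulty — is producing that short-cut, and this is where chordality and bridge-lessness of $G_l$ must both be used. The clean case is $t = 1$: then $x, w_1, y$ is a path with $x, y \in L_{l-1}$ and $w_1 \in L_l$; I want to argue $x$ and $y$ are adjacent (or at least joined by a path avoiding $L_l$). Here is the idea: $w_1$ has degree at least $2$ in $G_l$ and, since $G_l$ is bridge-less, every edge at $w_1$ lies on a cycle; pushing on this, $w_1$ together with its neighbourhood (all in $L_{l-1} \cup L_l$) should force a chord. More robustly, I would take the whole excursion into $L_l$ and argue: the sub-path $P = x w_1 \cdots w_t y$ together with a shortest $x$–$y$ path $Q$ inside $G_{l-1}$ (which exists since $x, y \in L_{l-1}$ and $G_{l-1}$ is connected — itself needing justification, or one works with the component of $e$) forms a closed walk; take a shortest $x$–$y$ path through $L_l$ and analyze the cycle it forms with $Q$, using chordality to repeatedly find chords and thereby triangulate away all the $L_l$-vertices. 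The concern is handling the interaction of several disjoint excursions of $C$ into $L_l$ and making the ``triangulate away'' argument rigorous without circularity; the author very likely does an induction on $l$ combined with a careful local argument about a single vertex of $L_l$ (using that it has $\ge 2$ neighbours, all in $L_{l-1} \cup L_l$, and that $G_l$ being chordal forces its neighbourhood to be highly structured), rather than the global cycle-surgery I sketched. I would first try the single-vertex local version: pick $w \in L_l$ of, say, maximum degree or an appropriate ``simplicial-like'' vertex, show that $G_l - w$ is still bridge-less and chordal with the same layer structure, and iterate to strip all of $L_l$; that reduces everything to showing one well-chosen vertex of the outermost layer can be removed without creating a bridge, which is the genuinely chordal-graph-theoretic step.
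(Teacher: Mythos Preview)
Your reduction---peel off the outermost BFS layer and induct downward---and the observation that any cycle through the putative bridge $e$ in $G_l$ must visit $L_l$ are exactly right and match the paper. Where you diverge is at the crux: you take an arbitrary cycle and then propose to short-cut its excursions into $L_l$ by triangulation or by iteratively deleting well-chosen outer-layer vertices. None of these routes is completed in your plan, and they are all harder than necessary.

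The paper's argument rests on two observations you did not make. First, a bridge of $G_{l-1}$ lies in every spanning tree of $G_{l-1}$, hence in a BFS tree rooted at $v$; so its endpoints lie in \emph{consecutive} layers $L_{i-1}$ and $L_i$ with $i \le l-1$, and in particular one endpoint $a$ satisfies $d(v,a)=i-1 \le l-2$. Second, instead of an arbitrary cycle through $e$, take a \emph{shortest} $a$--$b$ path $P$ in $G_l \setminus e$; then $P \cup \{e\}$ is automatically an \emph{induced} cycle in $G_l$. This induced cycle contains some $z \in L_l$, but $a$ (at distance at most $l-2$ from $v$) cannot be adjacent to $z$, so the cycle has length at least $4$, contradicting chordality of $G_l$ directly. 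That is the whole argument; no cycle surgery or vertex deletion is required.
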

\begin{proof}
It is enough to show that $G' = G[\bigcup_{i=0}^{r-1}{N^i(v)}]$ is a bridge-less chordal graph. The general result will follow by repeated application of the above. Every induced subgraph of a chordal graph is also chordal. Hence it suffices to show that $G'$ is bridge-less.

For contradiction, let us assume that $b = (x,y) \in E(G')$ is a bridge of $G'$. Consider a BFS tree $T'$ of $G'$ rooted on $v$. 
Since $b$ is a bridge, $b \in E(T')$ (else $G'\backslash b$ will be connected). Without loss of generality let $x \in N^{i-1}(v)$ and $y \in N^{i}(v)$, $i \leq r-1$. Since $G$ is bridge-less by assumption, there exists a path from $x$ to $y$ in $G \backslash b$. Consider a shortest such path $P$. Since $P$ is a shortest path, $P \cup b$ is an induced cycle in $G$. Since $G' \backslash b$ is disconnected, $P$ has to contain at least one vertex $z$ from $N^r(v)$. Further, since $x \in N^{i-1}(v), \; i-1 \leq r-2$ cannot be adjacent to $z$, $P$ should contain at least one more vertex $w$ from $G'$. Hence $P \cup \{b\}$ is an induced cycle of length at least $4$ in $G$ which contradicts the assumption that $G$ is chordal.
\end{proof}

With the above lemma, now we can easily give the proof of Theorem \ref{thm:chordal}
\vspace{1ex}

\noindent {\em Statement.}
If $G$ is a bridge-less chordal graph, then $rc(G) \leq 3.rad(G)$. Moreover, there exists a bridge-less chordal graph with $rc(G) = 3.rad(G)$.
\begin{proof}
We will prove the statement by an induction on radius. Any graph with radius zero is a singleton vertex which can be rainbow coloured using zero colours. Hence the statement is true for radius zero. Let the statement be true up till a radius of $r -1$. 

Now, let $G$ be any bridge-less chordal graph with radius $r$. Let $v$ be a central vertex of $G$, i.e., a vertex with eccentricity $r$. By Lemma \ref{lem:bridgelesschordal}, $G' =  G[\bigcup_{i=0}^{r-1}{N^i(v)}]$ is also a bridge-less chordal graph and its radius is at most $r-1$. Hence by induction hypothesis $rc(G') \leq 3(r-1)$. Since minimum degree is at least two for any bridge-less graph, $V(G')$ is a connected two-way dominating set for $G$. Hence by Theorem \ref{thm:twoway}, $rc(G) \leq rc(G') + 3 \leq 3r$. Thus the statement is true for all values of radius.

Consider the graph $G$ in Figure \ref{fig:k2nplus}. It is a bridge-less chordal graph with radius $1$ and rainbow connection number is $3$. (See the argument under Example \ref{ex:threshold} in Section \ref{sec:specialclass}.)


\end{proof}

\subsection{Proof of Theorem \ref{thm:unitinterval}}
\label{sec:unitinterval}
{\em Statement.} 
If $G$ is a unit interval graph such that $\delta(G) \geq 2$, then $rc(G) = diam(G)$.
\begin{proof}
Let $G$ be a unit interval graph with $\delta(G) \geq 2$. Consider a unit interval representation of $G$. For $u \in V(G)$, let $l(u)$ and $r(u)$ represent the left and right end points of $u$ respectively. Let $x$ and $y$ be the vertices corresponding to the intervals with leftmost left end and rightmost right end respectively. Consider a shortest path $P$ between $x$ and $y$, say $P = (x=x_1, x_2, \ldots, x_k=y)$. Clearly $k \leq diam(G) + 1$ and $P$ is a dominating path in $G$. Let $S_i = \{u \in V(G) | l(u) \leq r(x_i) \leq r(u)\}$, $i = 1, 2, \cdots, k-1$. It is easily seen that each $S_i$ induces a clique in $G$. Let $H$ be a subgraph of $G$ with $V(H) = \bigcup_{i=1}^{k-1}{S_i}$ and $E(H) = \bigcup_{i=1}^{k-1}E(G[S_i])$. Since $G$ is a unit interval graph, $H$ contains all edges incident on $P$ (including those in $P$). Hence $H$ is a spanning subgraph of $G$. Colour every edge in $E(G[S_i]) \backslash \bigcup_{j=1}^{i-1}{E(G[S_j])}$ with colour $i$ for $i = 1, 2, \cdots, k-1$. This colours every edge of $H$ using at most $k-1$ colours. Colour the remaining edges of $G$ using colour $1$. We claim that this is a rainbow colouring of $G$.

For any pair of vertices, $u$ and $v$ in $V(G)$, consider any shortest path $R$ between them in $H$. Clearly $R$ does not contain more than one edge from a single clique. In the above colouring, two edges of $H$ will get the same colour only if they belong to the same clique. Hence $R$ is a rainbow path. So $rc(G) \leq k-1 \leq diam(G)$. Since diameter is a lower bound for rainbow connection number, $rc(G) = diam(G)$. 
\end{proof}

\bibliographystyle{apalike}
\bibliography{rainbow}
\end{document}